\numberwithin{equation}{section}
\newtheorem{theorem}{Theorem}[section]
\newtheorem{proposition}[theorem]{Proposition}
\newtheorem{lemma}[theorem]{Lemma}
\theoremstyle{remark}
\DeclareMathOperator{\iRe}{Re}
\def\XXint#1#2#3{{\setbox0=\hbox{$#1{#2#3}{\int}$ }
\vcenter{\hbox{$#2#3$ }}\kern-.6\wd0}}
\begin{document}

\title[Invariant Gibbs measure evolution for radial NLW on the 3d ball]{Invariant Gibbs measure evolution for the radial nonlinear wave equation on the 3d ball}
\thanks{The research of J.B. was partially supported by NSF grants DMS-0808042 and DMS-0835373 and the research of A.B. was supported by NSF under agreement No. DMS-0808042 and the Fernholz Foundation}
\date{\today}
\author{Jean Bourgain}
\address{(J. Bourgain) School of Mathematics, Institute for Advanced Study, Princeton, NJ 08540}
\email{bourgain@math.ias.edu}
\author{Aynur Bulut}
\address{(A. Bulut) School of Mathematics, Institute for Advanced Study, Princeton, NJ 08540}
\email{abulut@math.ias.edu}
\begin{abstract}
We establish new global well-posedness results along Gibbs measure evolution for the nonlinear wave equation posed on the unit ball in $\mathbb{R}^3$ via two distinct approaches.  The first approach invokes the method established in the works \cite{B1,B2,B3} based on a contraction-mapping principle and applies to a certain range of nonlinearities.  The second approach allows to cover the full range of nonlinearities admissible to treatment by Gibbs measure, working instead with a delicate analysis of convergence properties of solutions.  The method of the second approach is quite general, and we shall give applications to the nonlinear Schr\"odinger equation on the unit ball in subsequent works \cite{BB1,BB2}.
\end{abstract}
\maketitle

\section{Introduction}

Our aim in the present work is to study the construction of global solutions for semilinear wave equations set on the unit ball in $\mathbb{R}^3$ with random initial data of supercritical regularity.  In the first part of the paper we prove global well-posedness for the nonlinear wave equation for a certain range of power-type nonlinearities via an application of the method developed in the seminal works \cite{B1,B2,B3}.  In the second part, we treat an expanded range of nonlinearities -- covering  the full range of powers up to the point where measure existence considerations become the dominant obstacle -- by introducing a new approach to obtain arbitrary long time well-posedness of solutions.  In subsequent companion works, we apply the ideas and techniques developed in the second part of the present paper to nonlinear Schr\"odinger equations on the two and three dimensional unit balls to obtain almost sure global well-posedness in these settings \cite{BB1,BB2}.

\subsection{The initial value problem}

We shall consider the Cauchy problem for the nonlinear wave equation posed on the unit ball in $\mathbb{R}^3$ with Dirichlet boundary conditions:
\begin{align*}
(NLW)\quad \left\lbrace\begin{array}{rll}u_{tt}-\Delta u+|u|^\alpha u&=0,&\quad (t,x)\in I\times B,\\
(u,u_t)|_{t=0}&=(u_0,u_1),&\quad x\in B\\
u|_{\partial B}&=0,&\quad t\in I,\end{array}\right.
\end{align*}
where $u:I\times B\rightarrow\mathbb{R}$, $0\in I\subseteq\mathbb{R}$ is a time interval, and $(u_0,u_1)$ is radial data belonging to the support of the Gibbs measure.  

In view of the scaling invariance of the equation, one expects to be able to construct local solutions when the initial data $(u_0,u_1)$ lies in the Sobolev spaces $H_x^s(B)\times H_x^{s-1}(B)$ for the {\it subcritical} and {\it critical} regimes, $s>\frac{3}{2}-\frac{2}{\alpha}$ and $s=\frac{3}{2}-\frac{2}{\alpha}$.  Such results are typically the outcome of fixed-point arguments set in appropriate function spaces.  On the other hand, when the initial data is {\it supercritical} with respect to the canonical scaling, one no longer expects such arguments to apply; indeed, it is well known that in many cases the solution map is not well defined.

In this work, we adopt the strategy of using invariant measures to obtain almost sure global well-posedness for (NLW) beyond the critical regularity threshold.  Through this approach, the ill-posedness of the problem is overcome by requiring that the local and global well-posedness results hold in a probabilistic sense.

More precisely, consider the sequence $(e_n)$ of radial eigenfunctions of $-\Delta$ on $B$ with vanishing boundary conditions, satisfying
\begin{align*}
e_n(x)\sim \frac{\sin(n\pi|x|)}{|x|}
\end{align*}
for every integer $n\geq 1$.  The initial data $(u_0,u_1)$ will be taken as
\begin{align*}
(u_0,u_1)=\bigg(\sum_{n\in\mathbb{N}} \frac{\alpha_n(\omega)}{n\pi}e_n(x),\sum_{n\in\mathbb{N}} \beta_n(\omega)e_n(x)\bigg),
\end{align*}
where $(\alpha_n)$ and $(\beta_n)$ are sequences of independently distributed normalized real-valued Gaussian random variables on a probability space $\Omega$.  As we will see below, this class of data corresponds precisely to the support of the Gibbs measure for (NLW).

In what follows, we shall work with the first-order reformulation of the equation in (NLW) as
\begin{align}
(i\partial_t -\sqrt{-\Delta})u+(\sqrt{-\Delta})^{-1}|\iRe u|^{\alpha}(\iRe u)=0,\label{reformulated}
\end{align}
where solutions $u$ to (NLW) correspond to solutions $\tilde{u}$ of ($\ref{reformulated}$) with initial data $\tilde{u}|_{t=0}=u_0+i(\sqrt{-\Delta})^{-1}u_1$.  

We consider solutions to ($\ref{reformulated}$) in the sense of the {\it Duhamel formula},
\begin{align*}
u(t)=S(t)u(0)-i\int_0^t S(t-\tau)(\sqrt{-\Delta})^{-1}\Big[|\iRe u(\tau)|^{\alpha}\iRe u(\tau)\Big]d\tau,
\end{align*}
where $S(t)$ is the associated linear propagator given by
\begin{align*}
S(t)\bigg(\sum_{n} \alpha_ne_n\bigg)=\sum_{n} \alpha_ne_ne(nt).
\end{align*}

The equation ($\ref{reformulated}$) is of the form $iu_t=(\sqrt{-\Delta})^{-1}(\partial H/\partial \overline{u})$ with conserved Hamiltonian
\begin{align*}
H(\phi)=\int_B |\nabla \phi|^2dx+\frac{1}{\alpha+2}\int_B |\iRe \phi|^{\alpha+2}dx.
\end{align*}

Moreover, introducing the projection operator
\begin{align*}
P_N\bigg(\sum_{n\in \mathbb{N}} u_ne_n\bigg)=\sum_{n\leq N}u_ne_n,
\end{align*}
the Hamiltonian structure of ($\ref{reformulated}$) provides the truncated problem 
\begin{align}
\left\lbrace\begin{array}{rl}(i\partial_t-\sqrt{-\Delta})u+P_N[(\sqrt{-\Delta})^{-1}|\iRe u|^\alpha (\iRe u)]&=0,\\
u|_{t=0}&=P_N\phi,\end{array}\right.\label{truncated}
\end{align}
with the {\it Gibbs measure}
\begin{align}
\mu_G^{(N)}(d\phi)&=e^{-H(P_N\phi)}\prod_{n=1}^N d^2\phi=e^{-\frac{1}{\alpha+2}\int |P_N\textrm{Re}(\phi)|^{\alpha+2}dx}d\mu_F^{(N)},\label{7.3}
\end{align}
where $\mu_F^{(N)}$ is the free measure associated to the Gaussian process 
\begin{align*}
\omega\mapsto P_N\phi^{(\omega)}:=\sum_{n\leq N} \frac{g_n(\omega)}{n\pi}e_n
\end{align*}
with $(g_n(\omega))$ being a sequence of normalized complex-valued Gaussian random variables.

\subsection{Discussion of the main results}

In accordance with the above remarks, our results will concern ($\ref{reformulated}$) equipped with initial data of the form
\begin{align*}
\phi^{(\omega)}(x)=\sum_{n\in\mathbb{N}} \frac{g_n(\omega)}{n\pi}e_n(x).
\end{align*}
\hspace*{\parindent} Standard arguments show that this initial data belongs $\mu_F^{(N)}$-almost surely to the spaces $L_x^p(B)$ for every $p<6$ and $H_x^s(B)$ for every $s<1/2$.  It can be shown that these estimates are sharp, and the data $\phi^{(\omega)}$ therefore belongs almost surely to the supercritical regime for $\alpha\geq 2$.  On the other hand, the range $\alpha<4$ forms exactly the set of powers $\alpha$ for which the Gibbs measures $\mu_G^{(N)}$ and the limiting measure
\begin{align*}
\mu_G=e^{-\frac{1}{\alpha+2}\int |\textrm{Re}(\phi)|^{\alpha+2}dx}d\mu_F
\end{align*}
are well-defined and nontrivial, with $\mu_F$ taken to be the free measure corresponding to the process $\omega\mapsto \phi^{(\omega)}$.

The work contained in the present paper can be divided into two parts.  The first part discusses a contraction-mapping based approach in the spirit of the works \cite{B1,B2,B3} and establishes a global well-posedness result $\mu_F$-almost surely for $\alpha$ in a certain range, namely $\alpha<1+\sqrt{5}$.  In the second part of the paper we introduce a new technique, using a non-contraction mapping based argument, which allows us to treat the full range of powers $\alpha<4$.

\vspace{0.15in}

\noindent {\bf $1.2.1.$ Contraction-mapping based argument.} In Theorem $\ref{thm1}$, we give a treatment of almost sure global well-posedness from the perspective of the contraction-mapping based approach as in the foundational works \cite{B1,B2,B3}.  These arguments take place in the setting of the Fourier restriction spaces $X^{s,b}$ introduced in \cite{B-GAFA1,B-GAFA2}.

\begin{theorem}\label{thm1}
Fix $0<\alpha<1+\sqrt{5}$.  Then, almost surely in $\phi$, for every $0<s<\frac{1}{2}$ and $0<T<\infty$, there exists $u_*\in C_t([0,T);H_x^s(B))$ such that the sequence $(u_N)_{N\geq 1}$ of solutions to (\ref{truncated}) converges to $u_*$ in the space $C_t([0,T);H_x^{s}(B))$.

Moreover, for all $0<s<\frac{5-\alpha}{2}$ and $t\in \mathbb{R}$ the sequence $(u^N(t))$ satisfies
\begin{align}
\sup_N\Vert u^N(t)-e^{it\sqrt{-\Delta}} (P_N\phi)\Vert_{H_x^s}<\infty.\label{eq-thm1}
\end{align}
\end{theorem}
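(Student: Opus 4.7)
The plan is to run Bourgain's invariant-measure argument in the Fourier restriction space $X^{s,b}$ adapted to the operator $i\partial_t-\sqrt{-\Delta}$ on $B$.  The first move is to split the solution of the truncated equation (\ref{truncated}) as
\begin{align*}
u^N(t)=S(t)P_N\phi+v^N(t),
\end{align*}
so that the rough Gaussian component $S(t)P_N\phi$ carries all of the low regularity $s<1/2$ of the data, and the nonlinear correction $v^N$, which satisfies the Duhamel equation
\begin{align*}
v^N(t)=-i\int_0^t S(t-\tau)P_N\Big[(\sqrt{-\Delta})^{-1}\big|\iRe(S(\tau)P_N\phi+v^N(\tau))\big|^{\alpha}\iRe(S(\tau)P_N\phi+v^N(\tau))\Big]d\tau,
\end{align*}
is expected to live at the much higher regularity $s'<(5-\alpha)/2$.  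I would aim to run a Banach fixed point for $v^N$ in $X_T^{s',b}$ with $b$ slightly above $1/2$, uniformly in $N$.

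The key analytical input is an $(\alpha{+}1)$-linear estimate of the schematic form
\begin{align*}
\Big\|P_N\Big[(\sqrt{-\Delta})^{-1}\big|\iRe(w+v)\big|^{\alpha}\iRe(w+v)\Big]\Big\|_{X_T^{s',b-1}}\lesssim R(w)+\Vert v\Vert_{X_T^{s',b}}^{\alpha+1},
\end{align*}
where $w=S(t)P_N\phi$ and $R(w)$ collects all mixed contributions containing at least one factor of $w$, which are to be controlled $\mu_F$-almost surely by exploiting the probabilistic improvement over deterministic Strichartz bounds (the free random evolution $w$ lies in $L_{t,x}^p$ for every $p<6$, well beyond what its Sobolev regularity $1/2^{-}$ would allow deterministically).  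The argument here uses dyadic Littlewood--Paley decomposition of $w$ and $v$, the radial Strichartz estimates on the ball from \cite{B2}, and a fractional Leibniz/Moser rule to accommodate the non-integer power $|\,\cdot\,|^{\alpha}$.  It is precisely at this step that the constraint $\alpha<1+\sqrt{5}$ emerges: it is the threshold at which the smoothing gain $(5-\alpha)/2$ from $(\sqrt{-\Delta})^{-1}$ still dominates the loss coming from pairing $\alpha+1$ factors of $w$ at regularity $1/2^{-}$ through the available Strichartz couples.

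With the multilinear estimate in hand, almost-sure local well-posedness for $v^N$ on a random interval $[0,\tau(\omega)]$ follows by contraction.  To globalize, I would invoke Bourgain's invariant-measure argument: the finite-dimensional truncated flow (\ref{truncated}) preserves $\mu_G^{(N)}$ defined in (\ref{7.3}) by Hamiltonian structure and Liouville's theorem, and for $\alpha<4$ these measures are uniformly absolutely continuous with respect to $\mu_F$ on sublevel sets of $\int|P_N\iRe\phi|^{\alpha+2}dx$.  Iterating the local theory on unit-length time intervals and using invariance to re-randomize at each step yields, for every $T>0$ and every $\varepsilon>0$, a set $\Omega_{T,\varepsilon}\subset\Omega$ with $\mu_F(\Omega_{T,\varepsilon})>1-\varepsilon$ on which $v^N$ stays bounded in $X_T^{s',b}$ uniformly in $N$; intersecting over a countable sequence $(T_k,\varepsilon_k)$ gives the desired full-measure set.

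Convergence of $(u^N)$ in $C_t([0,T);H_x^s(B))$ for $0<s<1/2$ then follows by applying the same multilinear estimate to the difference $v^{N_1}-v^{N_2}$ (so $v^N$ is Cauchy in $X_T^{s',b}\hookrightarrow C_tH_x^{s'}\hookrightarrow C_tH_x^s$) together with the standard convergence of truncated Gaussian series $S(t)P_N\phi$ in $C_tH_x^s$ for $s<1/2$.  The estimate (\ref{eq-thm1}) is then immediate since $u^N-e^{it\sqrt{-\Delta}}P_N\phi=v^N$ (up to a harmless real/imaginary identification) is bounded in $H_x^{s'}$ for every $s'<(5-\alpha)/2$ uniformly in $N$ on the constructed set.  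The main obstacle is unambiguously the multilinear estimate: the combination of a non-integer power, the mixed deterministic/random nature of the arguments, the restrictive radial Strichartz geometry on the ball, and the need for the bound to close in $X^{s,b}$ rather than in fixed-time Sobolev spaces is what drives the ceiling $\alpha<1+\sqrt{5}$, strictly below the measure-theoretic ceiling $\alpha<4$ treated by the second approach in the paper.
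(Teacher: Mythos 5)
Your high-level framework matches the paper's: decompose $u^N=S(t)P_N\phi+v^N$ with $v^N$ living at higher regularity $s'<(5-\alpha)/2$, run a contraction for $v^N$ in $X^{s',b}$, and globalize by iterating the local theory along the flow using invariance of $\mu_G^{(N)}$ (Propositions~\ref{prop-fp-1}, \ref{prop-fp-2}).  However, the central technical step is substantially different from what you propose, and the mechanism you give for the threshold $\alpha<1+\sqrt{5}$ is not the correct one.

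You anticipate a delicate $(\alpha{+}1)$-linear estimate via Littlewood--Paley decomposition, radial Strichartz estimates, and a fractional Leibniz/Moser rule.  The paper avoids all of these.  The only nonlinear estimate used is Lemma~\ref{lem210}, a single duality-based inhomogeneous bound $\bigl\|\int_0^t S(t-\tau)(\sqrt{-\Delta})^{-1}f\,d\tau\bigr\|_{X^{s,b}}\lesssim\|f\|_{L_x^pL_t^2}$ for $p>\frac{3}{3-s}$, proved directly from the explicit Duhamel kernel, the eigenfunction bounds (\ref{ei_est}), Cauchy--Schwarz and H\"older.  With $f=|\iRe u|^\alpha\iRe u$ one then factors by H\"older into $\|u\|_{L_x^{(\alpha+1)p}L_t^{2(\alpha+1)}}^{\alpha+1}$; no product rule for fractional derivatives is required, and the Lipschitz difference estimate in the contraction uses only the pointwise inequality $||\iRe f|^\alpha f-|\iRe g|^\alpha g|\lesssim(|\iRe f|^\alpha+|\iRe g|^\alpha)|f-g|$.  (Your reference to radial Strichartz estimates from \cite{B2} is also off; that paper concerns the periodic Zakharov system.)

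More importantly, the ceiling $\alpha<1+\sqrt{5}$ does not arise from ``the smoothing gain of $(\sqrt{-\Delta})^{-1}$ dominating the loss of pairing $\alpha+1$ factors of $w$.''  It arises from the requirement that two one-sided conditions on the auxiliary regularity $s$ be simultaneously satisfiable.  After reduction to the $L_x^{(\alpha+1)p}L_t^{2(\alpha+1)}$ norm with $p>\frac{3}{3-s}$: (i) for the random linear piece $v_0=S(t)\phi$ one needs $(\alpha+1)p<6$, which translates to $s<\frac{5-\alpha}{2}$ (this is where $\frac{5-\alpha}{2}$ actually comes from --- it is a condition on the almost-sure integrability of the free evolution, not a smoothing property of $(\sqrt{-\Delta})^{-1}$); (ii) for the deterministic piece $v\in X^{s,b}$, after passing to diagonal functions as in (\ref{3.3}) and using Sobolev-in-time plus the $L^p$ eigenfunction bounds, one needs $1-\frac{3}{(\alpha+1)p}<s-\frac12+\frac{1}{2(\alpha+1)}$, which after optimizing $p$ gives $s>\frac{3\alpha-4}{2\alpha}$.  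The two ranges $\left(\frac{3\alpha-4}{2\alpha},\frac{5-\alpha}{2}\right)$ are nonempty iff $\alpha^2-2\alpha-4<0$, i.e., $\alpha<1+\sqrt5$.  Your write-up leaves this entirely implicit and attributes the threshold to the wrong mechanism, so as stated the proposal does not actually produce the stated numerical constraint.  The remainder of your proposal (invariant-measure iteration, intersection over $(T_k,\varepsilon_k)$, passage to $(\ref{eq-thm1})$) is in line with Propositions~\ref{prop-fp-1} and~\ref{prop-fp-2}.
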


An analogue of this result was established in the case $\alpha<3$ without the use of $X^{s,b}$ spaces in \cite{BT12}.  The essential ingredients which contribute to the proof of Theorem $\ref{thm1}$ are threefold: (1) probabilistic estimates on the initial data and the linear evolution, (2) a Strichartz-type inequality adapted to the $X^{s,b}$ setting which leads to a robust local theory based on the Picard iteration scheme, and (3) long-time bounds on the growth of $H_x^s(B)$ norms of solutions to the truncated equation ($\ref{truncated}$), arising as a result of the invariance of the Gibbs measure, which enable the application of the framework developed in \cite{B1,B2,B3} to conclude the desired convergence result.  

\vspace{0.15in}

\noindent {\bf $1.2.2.$ Convergence based argument.} Our second result is devoted to the full range $\alpha<4$.  Recall that for $\alpha\geq 4$ the Gibbs measure is no longer well-defined.  As in our discussion of the contraction-mapping based approach, the argument is set in the Fourier restriction spaces $X^{s,b}$.
\begin{theorem}\label{thm2}
Fix $0<\alpha<4$.  Then, almost surely in $\phi$, for every $0<s<\frac{1}{2}$ and $0<T<\infty$, there exists $u_*\in C_t([0,T);H_x^s(B))$ such that the sequence $(u_N)_{N\geq 1}$ of solutions to (\ref{truncated}) converges to $u_*$ in the space $C_t([0,T);H_x^{s}(B))$.  In addition, (\ref{eq-thm1}) remains valid for all $0<s<\frac{5-\alpha}{2}$ and $t\in\mathbb{R}$.
\end{theorem}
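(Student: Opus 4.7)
\emph{Overall strategy.} The plan is to adopt the high-level blueprint of Theorem~\ref{thm1} -- short-time probabilistic well-posedness of the truncated problems, invariance of $\mu_G^{(N)}$ under \eqref{truncated}, and a Borel--Cantelli iteration passing from local to global times -- while replacing the contraction-mapping step with a direct Cauchy argument for the sequence $(u^N)$. Writing
\[
u^N(t) = e^{it\sqrt{-\Delta}} P_N\phi + v^N(t), \qquad v^N(t) = -i\int_0^t S(t-\tau)(\sqrt{-\Delta})^{-1} P_N\bigl[|\iRe u^N|^\alpha \iRe u^N\bigr](\tau)\,d\tau,
\]
the free piece $e^{it\sqrt{-\Delta}}P_N\phi$ already converges in $C_t([0,T];H_x^s)$ for every $s<1/2$, so the whole issue reduces to convergence of $v^N$. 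The smoothing factor $(\sqrt{-\Delta})^{-1}$ places $v^N$ into $H_x^{s'}$ for some $s' < (5-\alpha)/2$, and the inequality $(5-\alpha)/2 > 1/2$ holds exactly for $\alpha<4$, leaving room to run a well-posedness theory for $v^N$ at a regularity strictly above the one where the data lives.

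\emph{Local bounds.} The first step is to prove, almost surely, uniform-in-$N$ bounds on $v^N$ in $L_t^\infty([0,T_0];H_x^{s'}) \cap X^{s',b}$ for some $s'\in(1/2, (5-\alpha)/2)$, on a random interval of almost-surely positive length $T_0(\omega)$. This would be a Picard scheme at regularity $s'>1/2$, where the algebra-type product estimates for $|\iRe u|^\alpha\iRe u$ close deterministically; the rough free evolution $w^N := e^{it\sqrt{-\Delta}}P_N\phi$ enters the nonlinearity only through probabilistic Strichartz-type $L^q_{t,x}$ bounds that are a.s.\ finite by Gaussian concentration for $P_N\phi$, and no $H^{s'}$ control of $w^N$ itself is invoked.

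\emph{Pairwise Cauchy step.} The heart of the proof is showing that $(v^N)$ is Cauchy. For $M>N$,
\[
v^M - v^N = -i\int_0^t S(t-\tau)(\sqrt{-\Delta})^{-1}\bigl[P_N(F(u^M)-F(u^N)) + (P_M-P_N)F(u^M)\bigr]\,d\tau,
\]
with $F(u)=|\iRe u|^\alpha \iRe u$. The second term is a high-frequency tail that tends to zero as $N\to\infty$ thanks to the uniform $L_x^{\alpha+2}$ bound on $F(u^M)$ inherited from $\mu_G^{(N)}$-invariance. For the first term, decompose $u^M - u^N = (w^M-w^N) + (v^M-v^N)$: the $v$-difference carries the gained regularity $s'$, while the $w$-difference is explicitly a high-frequency piece of the Gaussian data. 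The main obstacle -- and the departure from Theorem~\ref{thm1} -- is controlling the multilinear difference $F(u^M)-F(u^N)$ without a Lipschitz estimate for $F$ at regularity $s<1/2$, which is exactly what breaks for $\alpha\geq 1+\sqrt{5}$. Instead, I would combine $H^{s'}$ product rules for pure $v$-terms with deterministic $L^p$-based product estimates for cross terms containing $w^M-w^N$, absorbing the latter by probabilistic smallness of $w^M-w^N$ in the relevant Strichartz norms outside an exceptional set of exponentially small measure. This should yield the Cauchy property of $(v^N)$ in $C_t([0,T_0];H_x^{s'})$ and hence of $(u^N)$ in $C_t([0,T_0];H_x^s)$ for every $s<1/2$.

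\emph{Extension to arbitrary $T$.} Finally, the Bourgain iteration scheme extends the above to any $T<\infty$: invariance of $\mu_G^{(N)}$ under \eqref{truncated} preserves the distribution of the data norms governing $T_0$, and a Borel--Cantelli argument over $\lceil T/T_0\rceil$ time steps produces a set of full $\mu_F$-measure on which the local convergence can be chained together. Passing to the limit gives $u_*\in C_t([0,T);H_x^s)$ with $u^N\to u_*$ in that space, and the bound \eqref{eq-thm1} follows directly from the uniform $H^{s'}$ control on $v^N$ propagated through the iteration with $s'$ approaching $(5-\alpha)/2$.
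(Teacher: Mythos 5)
Your proposal is, at its core, a re-packaging of the contraction-mapping argument of Theorem~\ref{thm1} applied to the Duhamel piece $v^N$, and it inherits exactly the obstruction that caps that method at $\alpha < 1+\sqrt{5}$. The step you describe as ``a Picard scheme at regularity $s'>1/2$, where the algebra-type product estimates for $|\iRe u|^\alpha\iRe u$ close deterministically'' is precisely what fails: to put the nonlinearity back into $X^{s',b}$ via Lemma~\ref{lem210} one must bound $\lVert v\rVert_{L^{\beta p}_x L^{2\beta}_t}$ (with $\beta=\alpha+1$) purely from $\lVert v\rVert_{X^{s',b}}$, and the computation in Section~3.3 shows this requires $s' > \frac{3\alpha-4}{2\alpha}$, which must coexist with $s' < \frac{5-\alpha}{2}$ coming from the free-evolution factor. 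For $\alpha$ near $4$, the lower bound approaches $1$ while the upper bound approaches $1/2$, so no admissible $s'$ exists; the compatibility window closes exactly at $\alpha=1+\sqrt5$. Moving the Picard scheme above regularity $1/2$ does not circumvent this, because both constraints live on the same exponent.

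The ingredient the paper uses to break through this barrier, and which is absent from your proposal, is Lemma~\ref{lem1a}: a probabilistic $L^p_x L^q_t$ ($p<6$, $q$ arbitrary) space-time bound on the \emph{full nonlinear} truncated solution $u^N$, obtained by transporting the Gaussian tail estimate at time zero to all times via invariance of $\mu_G^{(N)}$. This is qualitatively stronger than the two probabilistic inputs you do invoke (Strichartz bounds on the free evolution $w^N = S(t)P_N\phi$, and the fixed-time $L_x^{\alpha+2}$ bound from the Gibbs weight). With Lemma~\ref{lem1a} in hand, the paper never needs a deterministic Strichartz embedding for $v^N$ at all: in the multilinear estimate for $F(u_{N_1})-F(u_{N_0})$ it places the $\alpha$ ``background'' factors in the probabilistically controlled $L^p_x L^q_t$ norms of $u_{N_0},u_{N_1}$, and only the single ``difference'' factor $u_{N_1}-u_{N_0}$ is estimated by an $X^{s,b}\hookrightarrow L^{p_1}_x L^{q_1}_t$ embedding, whose numerology ($\frac{3}{2}-\frac1{q_1}-\frac3{p_1}<s$, with $p_1,q_1$ free) is far more permissive and accommodates all $\alpha<4$. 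This is a genuinely different bootstrap than a contraction estimate, and it is the reason the range of $\alpha$ extends.

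You are also glossing over a nontrivial wrinkle in the time-iteration step. Because the probabilistic $L^p_x L^q_t$ bound $B(N_0)$ on $u^{N_0},u^{N_1}$ must grow (logarithmically) with $N_0$ to keep the exceptional-set measure summable, the bootstrap interval $\Delta t \sim B(N_0)^{-\alpha/\gamma}$ shrinks as $N_0\to\infty$. Chaining $T/\Delta t$ many intervals therefore introduces a factor $C^{T/\Delta t}$, a small power of $N_0$, which must be beaten by the $N_0^{-c}$ smallness of the data difference; this balance is why the choice $B(N_0)\sim(\log N_0)^{\gamma/\alpha}$ is forced, and also why the paper first proves convergence along a rapidly growing subsequence $N_k=2^k$ and only afterwards upgrades to the full sequence using the additional high-frequency truncation estimate of Lemma~\ref{lem4.1p}. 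Your ``Borel--Cantelli over $\lceil T/T_0\rceil$ time steps'' with a fixed random $T_0(\omega)$ does not capture this deterioration and would not close as written.
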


Rather than pursuing a contraction-mapping argument as in the proof of Theorem $\ref{thm1}$, the proof of Theorem $\ref{thm2}$ is based on a detailed analysis of the convergence of the sequence of solutions $(u^N)$ to the truncated equations ($\ref{truncated}$).  The first step in performing this analysis is to enhance the probabilistic estimates described in Section $1.2.1$, which hold for the initial data and for the linear evolution, into corresponding estimates for the nonlinear evolution driven by ($\ref{truncated}$); see Lemma $\ref{lem1a}$.

With this refined probabilistic estimate in hand, the next step in the argument is to establish a bootstrap-type inequality for the $X^{s,b}$ norm of $u^{N_1}-u^{N_0}$ for $N_1>N_0\gg 1$.  These bounds are reminiscent of those appearing in the contraction-mapping argument, but allow the presence of an additional error term which converges to zero as $N_0\rightarrow\infty$.  Moreover, in order to close the bootstrap argument it becomes necessary to use short time intervals, with the length of the interval deteriorating as $N_0\rightarrow\infty$.

The last step in the proof of Theorem $\ref{thm2}$ is then to assemble the control given by the boostrap inequality into the desired convergence result globally in time.  To overcome the difficulty that the time interval deteriorates in the limit, we appeal to a covering argument which requires control over the growth of $H_x^s$ norms.  We first establish the weaker property that the sequence $(u^{N_k})$ is $\mu_F$-almost surely converging, with $N_k$ sufficiently rapidly increasing, for instance $N_k=2^k$.  The convergence of the full sequence requires better probabilistic estimates and is obtained by a refinement of the argument.  Finally, since the space $X^{s,b}\subset C_tH_x^s$, the conclusion from Theorem $\ref{thm2}$ follows.

We want to point out that the proof of Theorem $\ref{thm1}$ gives a somewhat stronger result in terms of stability, while Theorem $\ref{thm2}$ makes only the claim of convergence of the solutions of the truncated equations.

\section{Preliminary remarks and fundamental estimates}

Let $B$ denote the unit ball in $\mathbb{R}^3$.  Recall that for $n\geq 1$, the $n$th radial eigenfunction of $-\Delta$ on $B$ with vanishing Dirichlet boundary conditions is given by
\begin{align*}
e_n(x)\sim \frac{\sin(n\pi |x|)}{|x|},
\end{align*}
with associated eigenvalue $n^2$.
Moreover, simple calculations demonstrate
\begin{align}
\lVert e_n\rVert_{L_x^p}\lesssim \left\lbrace\begin{array}{ll}1&p<3,\\(\log n)^{1/3}&p=3,\\n^{1-\frac{3}{p}}&p>3.\end{array}\right.\label{ei_est}
\end{align}

We will often make use of ($\ref{ei_est}$) in the following form: for every sequence $(\alpha_n)\subset \mathbb{C}$ and $p>3$, one has
\begin{align}
\bigg\lVert\Big(\sum_n |\alpha_ne_n(x)|^2\Big)^{1/2}\bigg\rVert_{L_x^p}
&\leq \Big(\sum_n |\alpha_n|^2\lVert e_n\rVert_{L_x^p}^2\Big)^{1/2}
\label{eqb1}\leq \Big(\sum_n |\alpha_n|^2n^{2-\frac{6}{p}}\Big)^{1/2}
\end{align}
and in particular, if $\alpha_n\sim\frac{1}{n}$ then the quantities in ($\ref{eqb1}$) are finite whenever $p<6$.

An essential probabilistic tool in our considerations will be the following estimate for Gaussian processes: if $(g_n)$ is a sequence of IID complex Gaussians with $(\alpha_n)\in \ell^2$ and $2\leq q<\infty$, then 
\begin{align}
\label{gauss-bd-1}\bigg\lVert \sum_{n} \alpha_ng_n(\omega)\bigg\rVert_{L^q(d\omega)}&\lesssim \sqrt{q}\big(\sum_{n} |\alpha_n|^2\big)^{1/2}.
\end{align}

\subsection{$X^{s,b}$ spaces}

Fix $s\in\mathbb{R}$, $\frac{1}{2}<b<1$ and a time interval $I\subset\mathbb{R}$ with $|I|<1/2$.  We define the space $X^{s,b}$ as the collection of functions $f:I\times B\rightarrow\mathbb{R}$ having the representation 
\begin{align}
f(t,x)=\sum_{m,n} \widehat{f}(m,n) e_n(x)e(mt),\quad t\in I,\, x\in B\label{eqa}
\end{align}
with $(\widehat{f}(m,n))\subset \mathbb{C}$, such that the restriction-type norm
\begin{align*}
\lVert f\rVert_{X^{s,b}}&=\inf\bigg(\sum_{m,n} \langle n-m\rangle^{2b}\langle n\rangle^{2s}|\widehat{f}(m,n)|^2\bigg)^{1/2}.
\end{align*}
is finite, where the infimum is taken over all representations of the form ($\ref{eqa}$); see also \cite{B-GAFA1,B-GAFA2}.  Note that the values $\widehat{f}(m,n)$ are not uniquely determined.

\subsection{A deterministic estimate on the nonlinearity}

The following bound of inhomogeneous Strichartz type will form a key tool in treating estimates of the nonlinearity in order to obtain the contraction and bootstrap estimates in each of our approaches.

\begin{lemma}
\label{lem210}
Fix $s\in (0,1)$ and $p>\frac{3}{3-s}$.  Then for every $b>\frac{1}{2}$ sufficiently close to $\frac{1}{2}$ there exists a constant $C>0$ such that 
\begin{align}
\bigg\lVert \int_0^t S(t-\tau)(\sqrt{-\Delta})^{-1}f(\tau)d\tau\bigg\rVert_{X^{s,b}}\leq C\lVert f\rVert_{L_x^p(B;L_t^2(I))}.\label{eqlem210}
\end{align}
\end{lemma}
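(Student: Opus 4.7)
The plan is to combine the standard $X^{s,b}$ Duhamel inequality with a duality argument, reducing matters to a dual Strichartz-type embedding that can be proved directly using the eigenfunction bound $(\ref{ei_est})$.  First I would invoke the standard Duhamel lemma for $X^{s,b}$ (proved by multiplying by a smooth temporal cutoff supported slightly larger than $I$ and using $b>1/2$):
\begin{align*}
\bigg\lVert \int_0^t S(t-\tau) G(\tau)\, d\tau \bigg\rVert_{X^{s,b}} \lesssim \lVert G\rVert_{X^{s,b-1}}.
\end{align*}
Applied with $G=(\sqrt{-\Delta})^{-1}f$, the action of $(\sqrt{-\Delta})^{-1}$ as $n^{-1}$ in frequency gives $\lVert G\rVert_{X^{s,b-1}}=\lVert f\rVert_{X^{s-1,b-1}}$, reducing the problem to
\begin{align*}
\lVert f\rVert_{X^{s-1,b-1}} \lesssim \lVert f\rVert_{L^p_x(B;\, L^2_t(I))}.
\end{align*}

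Since both exponents of $X^{s-1,b-1}$ are negative, I would then dualize: $\lVert f\rVert_{X^{s-1,b-1}}=\sup_{\lVert g\rVert_{X^{1-s,1-b}}\leq 1}|\langle f,g\rangle|$, and H\"older's inequality gives $|\langle f,g\rangle|\leq \lVert f\rVert_{L^p_x L^2_t}\lVert g\rVert_{L^{p'}_x L^2_t}$.  The proof thus reduces to establishing the dual embedding
\begin{align*}
\lVert g\rVert_{L^{p'}_x(B;\, L^2_t(I))} \lesssim \lVert g\rVert_{X^{1-s,1-b}}. \qquad (\star)
\end{align*}

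To prove $(\star)$, I write $g(t,x)=\sum_{m,n}\widehat g(m,n)e_n(x)e(mt)$ in a representation optimizing the $X^{1-s,1-b}$-norm and set $h_m(x)=\sum_n \widehat g(m,n)e_n(x)$.  Since $|I|<1/2$, extending in $t$ to $[0,1]$ and using orthonormality of $\{e(mt)\}_{m\in\mathbb{Z}}$ gives $\lVert g(\cdot,x)\rVert_{L^2_t(I)}^2\leq \sum_m |h_m(x)|^2$, and Minkowski's inequality (applicable when $p'\geq 2$; the case $p'<2$ follows directly from $L^{p'}(B)\hookrightarrow L^2(B)$) produces $\lVert g\rVert_{L^{p'}_x L^2_t}^2\leq \sum_m \lVert h_m\rVert_{L^{p'}_x}^2$.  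For each $m$, the triangle inequality combined with the eigenfunction bound $\lVert e_n\rVert_{L^{p'}}\lesssim n^{1-3/p'}$ from $(\ref{ei_est})$ (treating the representative range $p'>3$), followed by weighted Cauchy--Schwarz in $n$ with weight $w_n=\langle n-m\rangle^{2(1-b)}\langle n\rangle^{2(1-s)}$, yields
\begin{align*}
\lVert h_m\rVert_{L^{p'}_x}^2 \lesssim \Sigma_m\cdot \sum_n w_n\,|\widehat g(m,n)|^2, \qquad \Sigma_m:=\sum_n \frac{n^{2s-6/p'}}{\langle n-m\rangle^{2(1-b)}}.
\end{align*}
Summation in $m$ then produces $(\star)$ as soon as $\sup_m \Sigma_m<\infty$.

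The main obstacle is precisely the uniform control $\sup_m \Sigma_m<\infty$.  The hypothesis $p>3/(3-s)$, equivalently $p'<3/s$, ensures $2s-6/p'<0$ and thereby provides decay in $n$.  The dominant contribution to $\Sigma_m$ for large $m$ comes from the near-diagonal region $n\sim m$, yielding $\Sigma_m \lesssim m^{\,2b-1-(6/p'-2s)}$; uniform boundedness therefore forces $b-1/2\leq 3/p'-s$, which is achievable by taking $b>1/2$ sufficiently close to $1/2$ (an option guaranteed by the strict inequality in the hypothesis).  This delicate quantitative interplay between $s$, $p$, and $b$ is the core of the argument and explains exactly the conditions appearing in the statement of the lemma.
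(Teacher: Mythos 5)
Your proof is correct and follows essentially the same route as the paper's: reduce to bounding $\lVert f\rVert_{X^{s-1,b-1}}$, dualize against the unit ball of $X^{1-s,1-b}$, apply Cauchy--Schwarz in $t$ and H\"older in $x$, and close via the eigenfunction bounds $(\ref{ei_est})$ together with a uniform-in-$m$ estimate on a weighted discrete sum. The paper reaches $\lVert f\rVert_{X^{s-1,b-1}}$ by computing the Duhamel integral explicitly in Fourier (equation $(\ref{eq-nonlin-duhamel-nlw})$ and the step to $(\ref{eq_aaa_2b})$), where you instead invoke the abstract $X^{s,b}$ inhomogeneous estimate $\lVert\int_0^t S(t-\tau)G\,d\tau\rVert_{X^{s,b}}\lesssim\lVert G\rVert_{X^{s,b-1}}$; and to control the dual norm, the paper splits off a factor $\langle n\rangle^\epsilon$ from the weight and uses summability of $\beta_n$, whereas you apply weighted Cauchy--Schwarz and track the kernel $\Sigma_m=\sum_n n^{2s-6/p'}\langle n-m\rangle^{-2(1-b)}$ directly. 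These are equivalent computations: in both, uniform boundedness of a sum $\sum_n \langle n\rangle^{-\tau}\langle n-m\rangle^{-2(1-b)}$ with $\tau>0$ is what forces $b$ close to $1/2$ and $p>\tfrac{3}{3-s}$, and your estimate $\Sigma_m\lesssim m^{2b-1-(6/p'-2s)}$ with the boundary cases ($p'\le 3$, $p'<2$) handled separately is a clean way to make that transparent.
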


\begin{proof}
Fix $b>\frac{1}{2}$ to be determined later in the argument.  Then, writing
\begin{align*}
f(t,x)=\sum_{m,n} \widehat{f}(m,n)e_n(x)e(mt),
\end{align*}
we obtain
\begin{align}
\int_0^t S(t-\tau)(\sqrt{-\Delta})^{-1}f(\tau)d\tau&=\sum_{m,n}\frac{\widehat{f}(m,n)}{n}\cdot \frac{e(mt)-e(nt)}{m-n}e_n(x),\label{eq-nonlin-duhamel-nlw}
\end{align}
and therefore 
\begin{align}
\nonumber &\bigg\lVert \int_0^t  S(t-\tau)(\sqrt{-\Delta})^{-1}f(\tau)d\tau\bigg\rVert_{X^{s,b}(I)}\\
\nonumber &\hspace{0.4in}\lesssim \bigg(\sum_{m,n} \frac{|\widehat{f}(m,n)|^2}{\langle n\rangle^{2(1-s)}\langle m-n\rangle^{2(1-b)}}\bigg)^{1/2}\\
\nonumber &\hspace{1.8in}+\bigg(\sum_n \bigg(\sum_{m\neq n} \frac{|\widehat{f}(m,n)|}{\langle n\rangle^{2(1-s)}\langle m-n\rangle}\bigg)^2\bigg)^{1/2}\\
\label{eq_aaa_2b}&\hspace{0.4in}\lesssim \bigg(\sum_{m,n} \frac{|\widehat{f}(m,n)|^2}{\langle n\rangle^{2(1-s)}\langle m-n\rangle^{2(1-b)}}\bigg)^{1/2},
\end{align}
where the last line follows from the Cauchy-Schwarz inequality and the hypothesis $b>1/2$.

To estimate ($\ref{eq_aaa_2b}$) we argue by duality, which gives
\begin{align*}
(\ref{eq_aaa_2b})&\leq \sup \int_0^1\int_{B_3} f(t,x)\overline{g(t,x)}dxdt,
\end{align*}
where the supremum is over functions $g$ of the form
\begin{align}
g(t,x)=\sum_{m,n} \frac{g_{m,n}}{\langle n\rangle^{1-s}\langle m-n\rangle^{1-b}}e_n(x)e(mt)\quad\textrm{with}\quad \sum_{m,n} |g_{m,n}|^2\leq 1.\label{eq_aaa_2d}
\end{align}

Fix such a function $g$.  Invoking the Cauchy-Schwarz inequality in time, we have
\begin{align}
\int_0^t \int_{B_3} f(t,x)\overline{g(t,x)}dxdt&\leq \int_{B_3} \lVert f(t,x)\rVert_{L_t^2}\lVert g(t,x)\rVert_{L_t^2}dx.\label{eq12a}
\end{align}
On the other hand, H\"older's inequality gives
\begin{align*}
\lVert g(t,x)\rVert_{L_t^2}&\leq \bigg(\sum_{m}\big(\sum_n \frac{|g_{m,n}|}{\langle n\rangle^{1-s}\langle m-n\rangle^{1-b}}|e_n(x)|\big)^{2}\bigg)^{1/2}\\
&\leq \bigg(\sum_{m,n}\frac{\alpha_m^2}{\langle n\rangle^{2(1-s)}\langle m-n\rangle^{2(1-b)}}|e_n(x)|^2\bigg)^{1/2}\\
&=\bigg(\sum_{n}\frac{\beta_n}{\langle n\rangle^{2(1-s)-\epsilon}}|e_n(x)|^2\bigg)^{1/2},
\end{align*}
where we have fixed $\epsilon>0$ and set 
\begin{align*}
\alpha_m^2:=\sum_n |g_{m,n}|^2\quad\textrm{and}\quad \beta_n:=\sum_{m} \frac{\alpha_m^2}{\langle n\rangle^{\epsilon}\langle m-n\rangle^{2(1-b)}}.
\end{align*}

Now, using H\"older's inequality in space and the bound $\sum_n\beta_n<\infty$ for $b$ sufficiently close to $1/2$ (as a consequence of $\sum_{m,n} |g_{m,n}|^2\leq 1$), followed by ($\ref{ei_est}$), we obtain
\begin{align}
\nonumber (\ref{eq12a})&\leq \lVert f\rVert_{L_x^{p}L_t^2}\bigg\lVert \sum_n \frac{\beta_n}{\langle n\rangle^{2(1-s)-\epsilon}}|e_n(x)|^2\bigg\rVert_{L_x^{q/2}}^{1/2}\\
\nonumber &\leq \lVert f\rVert_{L_x^{p}L_t^2}\sup_n \frac{\lVert  e_n(x)\rVert_{L_x^{q}}}{\langle n\rangle^{(1-s)-\frac{\epsilon}{2}}}\left(\sum_n\beta_n\right)^{1/2}\\
&\lesssim \lVert f\rVert_{L_x^{p}L_t^2}\label{eq-star-1}
\end{align}
with $\frac{1}{p}+\frac{1}{q}=1$ and $\epsilon$ sufficiently small, provided that $3<q<\frac{3}{s}$, which corresponds to the condition $p>\frac{3}{3-s}$.

Combining ($\ref{eq_aaa_2b}$) with ($\ref{eq-star-1}$) completes the desired estimate.
\end{proof}

\section{Proof of Theorem $\ref{thm1}$: contraction-mapping based approach}

In this section, we prove Theorem $\ref{thm1}$, which establishes almost sure global well-posedness of solutions to the reformulated nonlinear wave equation ($\ref{reformulated}$) for the range $1\leq \alpha<1+\sqrt{5}$ by using a fixed-point argument.  

We first consider the case $\alpha=3$, for which the equation ($\ref{reformulated}$) becomes 
\begin{align}
(i\partial_t-\sqrt{-\Delta})u+(\sqrt{-\Delta})^{-1}|\iRe u|^3(\iRe u)=0,\label{1.0}
\end{align}
with Duhamel formula 
\begin{align}
u(t)=S(t)u_0-i\int_0^t S(t-\tau)(\sqrt{-\Delta})^{-1}(|\iRe u(\tau)|^3\iRe u(\tau))d\tau.\label{1.1}
\end{align}

\subsection{Estimate on the nonlinear term in (\ref{1.1})}
The main issue in establishing the fixed-point iteration for ($\ref{1.0}$) is to obtain suitable estimates on the nonlinear term.  This is accomplished by appealing to Lemma $\ref{lem210}$ and estimating the resulting norm.

In particular, suppose that $\lVert u\rVert_{X^{s,b}}\leq 1$ and apply ($\ref{eqlem210}$) of Lemma $\ref{lem210}$ with $f=|\iRe u|^3\iRe u$.  Note first that $u$ may be written as
\begin{align}
u(x,t)=\sum_{m,n} \langle n\rangle^{-s}\langle m-n\rangle^{-b}a_{n,m}e_n(x)e(mt)\label{3.1}
\end{align}
for some $(a_{n,m})$ satisfying $\sum_{m,n} |a_{n,m}|^2\leq 1$.  

Setting  
\begin{align*}
\beta_\ell^2=\sum_n |a_{n,n+\ell}|^2,
\end{align*}
we rewrite ($\ref{3.1}$) as
\begin{align}
\sum_{\ell\in\mathbb{Z}} \langle\ell\rangle^{-b}\beta_\ell\bigg\{\sum_n \langle n\rangle^{-s}\beta_\ell^{-1}a_{n,n+\ell}e_n(x)e(nt)\bigg\}e(\ell t)\label{3.2}
\end{align}

Note that by the Cauchy-Schwarz inequality and the condition $b>1/2$ we have
\begin{align*}
\sum_{\ell\in \mathbb{Z}} \langle \ell\rangle^{-b}\beta_\ell \leq \bigg(\sum_{\ell\in\mathbb{Z}}\langle \ell\rangle^{-2b}\bigg)^{1/2}\bigg(\sum_{\ell\in\mathbb{Z}} \beta_\ell^2\bigg)^{1/2}\lesssim 1.
\end{align*}
It follows from convexity that we may replace $u$ by a function of the form
\begin{align}
u_1(t,x)=\sum_n \langle n\rangle^{-s}a_ne_n(x)e(nt)\label{3.3}
\end{align}
with $\sum_n |a_n|^2\leq 1$; indeed, in view of Lemma $\ref{lem210}$ the relevant bound is 
\begin{align*}
\lVert u\rVert_{L_x^{4p}L_t^8}&\leq \sum_{\ell\in \mathbb{Z}} \langle \ell\rangle^{-b}\beta_{\ell}\bigg\lVert \sum_n \langle n\rangle^{-s}\beta_{\ell}^{-1}a_{n,n+\ell}e_n(x)e(nt)\bigg\rVert_{L_x^{4p}L_t^{8}}\\
&\leq C\sup_{\ell\in\mathbb{N}} \bigg\lVert\sum_n \langle n\rangle^{-s}\beta_{\ell}^{-1}a_{n,n+\ell}e_n(x)e(nt)\bigg\rVert_{L_x^{4p}L_t^{8}}.
\end{align*}
It then remains to estimate
\begin{align*}
\lVert u_1\rVert_{L_x^{4p}L_t^{8}}&\lesssim T^{1/\delta}\lVert u_1\rVert_{L_x^{4p}H_t^{\frac{3}{8}+\delta}}\\
&\leq T^{1/\delta}\bigg\lVert \bigg(\sum_n \langle n\rangle^{-2s+\frac{3}{4}+2\delta}|a_n|^2e_n(x)^2\bigg)^{1/2}\bigg\rVert_{L_x^{4p}}\\
&\leq T^{1/\delta}\bigg(\sum_n\langle n\rangle^{-2s+\frac{3}{4}+2\delta}|a_n|^2\lVert e_n\rVert_{L_x^{4p}}^2\bigg)^{1/2}\\
&\leq T^{1/\delta}\max_n \frac{\lVert e_n\rVert_{L_x^{4p}}}{\langle n\rangle^{s-\frac{3}{8}-\delta}}\\
&\leq CT^{1/\delta}
\end{align*}
where we have fixed $\delta>0$ sufficiently small, provided that
\begin{align}
1-\frac{3}{4p}<s-\frac{3}{8}.\label{3.4}
\end{align}
Recalling the condition $p>\frac{3}{3-s}$, we require
\begin{align}
s>\frac{5}{6}.\label{3.5}
\end{align}

\subsection{Random data and local well-posedness for (\ref{1.0})} In order to proceed with our treatment of the local well-posedness theory for ($\ref{1.0}$), we now discuss
the relevant probabilistic aspects.

Take
\begin{align}
u_0(x)=u_{0,\omega}(x)=\sum_n \frac{g_n(\omega)}{n\pi}e_n(x)\label{4.1}
\end{align}
according to the support of the Gibbs measure,
and 
fix $s<1$ and $b>1/2$, respectively close to $1$ and $1/2$.  Our aim is to prove that for $T$ sufficiently small, except
for an $\omega$-set of small measure, the solution $u$ of ($\ref{1.1}$) satisfies 
\begin{align*}
u\in S(t)\phi+B
\end{align*}
with $B$ a small ball in the space $X^{s,b}$ (with the qualifications that are usual in this context; c.f. \cite{B1}).

Writing 
\begin{align*}
u(t)=S(t)\phi+v(t)=:v_0(t)+v(t)
\end{align*}
with $\lVert v\rVert_{X^{s,b}}<1$, we first verify that the second term in ($\ref{1.1}$) is indeed bounded in $X^{s,b}$.  

Note that the function $v$ satisfies the integral equation 
\begin{align}
v(t)=\int_0^t S(t-\tau)(\sqrt{-\Delta})^{-1}|\iRe (v_0(\tau)+v(\tau))|^3\iRe(v_0(\tau)+v(\tau))d\tau.\label{eq_aaa1}
\end{align}
Applying Lemma $\ref{lem210}$ with $f=|\iRe u|^3 \iRe u$, it suffices to bound
\begin{align*}
\lVert f\rVert_{L_x^pL_t^2}\leq \lVert u\rVert_{L_x^{4p}L_t^8}^4
\end{align*}
and hence
\begin{align}
\lVert u\rVert_{L_x^{4p}L_t^8}\leq \lVert v_0\rVert_{L_x^{4p}L_t^8}+\lVert v\rVert_{L_x^{4p}L_t^8}\label{4.3}
\end{align}
where the second term of ($\ref{4.3}$) was already estimated in Section $3.1$, assuming $s>\frac{5}{6}$.

To estimate the first term in ($\ref{4.3}$), we give a probablistic lemma for the linear evolution:
\begin{lemma}
\label{prop_aaa_1}
Fix $0<T<+\infty$ and let $2\leq p<6$ be given together with $2\leq q<\infty$.  Then there exists $c>0$ such that for every $\lambda>0$ and $N\in\mathbb{N}$, one has
\begin{align*}
\mu_F^{(N)}\bigg(\bigg\{\phi_N:\lVert S(t)\phi_N\rVert_{L_x^pL_t^q}>\lambda\bigg\}\bigg)\lesssim \exp(-c\lambda^2).
\end{align*}
\end{lemma}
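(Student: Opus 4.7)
The plan is to reduce the claimed sub-Gaussian tail to a polynomial moment bound on $\lVert S(t)\phi_N\rVert_{L_x^p L_t^q}$, then convert via Chebyshev's inequality and optimize over the moment exponent — a standard chaining of (\ref{gauss-bd-1}) with large deviations.

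More concretely, the first step is to establish, for every $r \geq \max(p,q)$, the bound
\[\big\lVert \lVert S(t)\phi_N\rVert_{L_x^p L_t^q}\big\rVert_{L^r(d\omega)} \leq C_T \sqrt{r},\]
with $C_T$ independent of $N$. Writing $S(t)\phi_N(x) = \sum_{n\leq N} \frac{g_n(\omega)}{n\pi}e_n(x)e(nt)$, Minkowski's integral inequality permits moving the $L^r(d\omega)$ norm inside both the $L^p_x$ and $L^q_t$ norms (precisely because $r \geq \max(p,q)$). The Gaussian moment bound (\ref{gauss-bd-1}), applied pointwise in $(t,x)$ together with $|e(nt)|=1$, then yields
\[\bigg\lVert \sum_{n\leq N}\frac{g_n(\omega)}{n\pi}e_n(x)e(nt)\bigg\rVert_{L^r(d\omega)} \lesssim \sqrt{r}\bigg(\sum_{n\leq N}\frac{|e_n(x)|^2}{n^2}\bigg)^{1/2}.\]
As the right side is $t$-independent, integrating over $[0,T]$ contributes a factor $T^{1/q}$, and the remaining $L^p_x$ norm of the square function is uniformly bounded in $N$ by (\ref{eqb1}) with $\alpha_n\sim 1/n$, which is finite thanks to the hypothesis $p<6$.

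Given this moment control, Chebyshev's inequality yields
\[\mu_F^{(N)}\Big(\big\{\lVert S(t)\phi_N\rVert_{L_x^p L_t^q}>\lambda\big\}\Big) \leq \lambda^{-r}(C_T\sqrt{r})^r\]
for any admissible $r$. Optimizing by taking $r \sim \lambda^2/(eC_T^2)$ produces the desired decay $\exp(-c\lambda^2)$ in the regime where $\lambda$ is large enough for this choice of $r$ to exceed $\max(p,q)$; for smaller $\lambda$ the stated estimate holds trivially after suitably shrinking the constant $c$.

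I do not foresee any serious obstacle here: this is the standard upgrade from Gaussian $L^r$ moments to sub-Gaussian tails. The only point worth noting is that the admissibility constraint $r\geq \max(p,q)$ is automatically satisfied in the relevant large-$\lambda$ regime, and the threshold $p<6$ enters precisely through the $L_x^p$-boundedness of the square function $\big(\sum_n |e_n(x)|^2/n^2\big)^{1/2}$ furnished by (\ref{eqb1}).
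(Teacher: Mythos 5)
Your argument is correct and follows the paper's proof essentially verbatim: Chebyshev in the $r$th moment, Minkowski's inequality to move the $L^r(d\omega)$ norm inside the mixed $L^p_x L^q_t$ norm (justified by $r\geq\max(p,q)$), the Gaussian moment bound (\ref{gauss-bd-1}) applied pointwise, the square-function bound (\ref{eqb1}) using $p<6$, and optimization in $r\sim\lambda^2$. The extra remarks on the large-versus-small $\lambda$ dichotomy and on where the factor $T^{1/q}$ enters are just fleshing out steps the paper leaves implicit.
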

\begin{proof}
Fix $r\geq\max\{p,q\}$.  The Tchebyshev and Minkowski inequalities then give
\begin{align}
\nonumber &\mu_F^{(N)}(\{\phi_N:\lVert S(t)\phi_N\rVert_{L_x^pL_t^q}>\lambda\})\\
\nonumber &\hspace{1.2in}\leq \frac{1}{\lambda^r}\mathbb{E}_{\mu_F}\bigg[\lVert S(t)\phi\rVert^r_{L_x^pL_t^q}\bigg]\\
\nonumber &\hspace{1.2in}\leq \frac{1}{\lambda^r}\int \bigg\lVert \sum_n \frac{g_n(\omega)}{n}e_n(x)e(nt)\bigg\rVert_{L_x^pL_t^q}^rd\mu_F^{(N)}(\phi)\\
\nonumber &\hspace{1.2in}\lesssim \bigg(\frac{\sqrt{r}}{\lambda}\bigg)^{r}\bigg\lVert \bigg(\sum_n \frac{|e_n(x)|^2}{n^{2}}\bigg)^{1/2}\bigg\rVert_{L_x^p}^r\\
\nonumber &\hspace{1.2in}\lesssim \bigg(\frac{\sqrt{r}}{\lambda}\bigg)^{r}
\end{align}
where we have again used ($\ref{eqb1}$) and ($\ref{gauss-bd-1}$) together with the hypothesis $p<6$.  The desired conclusion now follows by optimizing in the choice of $r$.
\end{proof}

We will also need
\begin{lemma}
\label{lem-prob-data}
Fix $s\in [0,1/2)$, $0<T<+\infty$, and let $2\leq p<\frac{6}{1+2s}$ be given.  Then there exists $c>0$ such that for every $\lambda>0$ and $N\in\mathbb{N}$, one has
\begin{align*}
\mu_F^{(N)}\bigg(\bigg\{\phi_N:\lVert (\sqrt{-\Delta})^s\phi_N\rVert_{L_x^p}>\lambda\bigg\}\bigg)\lesssim \exp(-c\lambda^2).
\end{align*}
\end{lemma}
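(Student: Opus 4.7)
The plan is to follow the exact template of the preceding Lemma (for the linear evolution), namely the chain: Chebyshev $\to$ Minkowski $\to$ pointwise Gaussian moment bound $\to$ square-function estimate, and then optimize the moment $r$ in $\lambda$. First, since $-\Delta e_n = n^2 e_n$, we have the explicit representation
\begin{align*}
(\sqrt{-\Delta})^s \phi_N(x) = \sum_{n\leq N} \frac{g_n(\omega)}{n^{1-s}\pi} e_n(x).
\end{align*}
For any $r\geq \max\{p,2\}$, Chebyshev gives $\mu_F^{(N)}(\{\|(\sqrt{-\Delta})^s\phi_N\|_{L^p_x}>\lambda\}) \leq \lambda^{-r}\,\mathbb{E}\|(\sqrt{-\Delta})^s\phi_N\|_{L^p_x}^r$; then Minkowski's inequality (using $r\geq p$) allows us to exchange $L^r(d\omega)$ with $L^p_x$, reducing matters to estimating $\| \,\|(\sqrt{-\Delta})^s\phi_N(x)\|_{L^r(d\omega)}\,\|_{L^p_x}$. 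For each fixed $x$, the Gaussian moment bound (\ref{gauss-bd-1}) applied to the scalar series in $\omega$ yields
\begin{align*}
\|(\sqrt{-\Delta})^s\phi_N(x)\|_{L^r(d\omega)} \lesssim \sqrt{r}\,\Big(\sum_{n\leq N} \frac{|e_n(x)|^2}{n^{2(1-s)}}\Big)^{1/2}.
\end{align*}

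The main computation is then the $L^p_x$ bound of this square function, which is where the condition $p<\tfrac{6}{1+2s}$ enters. In the range $p\geq 3$, the estimate (\ref{eqb1}) gives directly
\begin{align*}
\Big\|\Big(\sum_n \frac{|e_n|^2}{n^{2(1-s)}}\Big)^{1/2}\Big\|_{L^p_x} \leq \Big(\sum_n n^{2s-\frac{6}{p}}\Big)^{1/2},
\end{align*}
which is finite precisely when $2s-\tfrac{6}{p}<-1$, i.e.\ $p<\tfrac{6}{1+2s}$. In the remaining range $2\leq p<3$, the hypothesis $s<1/2$ ensures $\tfrac{6}{1+2s}>3$, so we may fix $q\in(3,\tfrac{6}{1+2s})$ and use the embedding $L^q(B)\hookrightarrow L^p(B)$ (with constant depending only on $|B|$) to reduce to the previous case.

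Combining these steps, we obtain $\mathbb{E}\|(\sqrt{-\Delta})^s\phi_N\|_{L^p_x}^r \leq (C\sqrt{r})^r$, and consequently
\begin{align*}
\mu_F^{(N)}\big(\{\|(\sqrt{-\Delta})^s\phi_N\|_{L^p_x}>\lambda\}\big) \leq \Big(\frac{C\sqrt{r}}{\lambda}\Big)^r,
\end{align*}
with the implicit constant uniform in $N$. Choosing $r\sim \lambda^2/(eC^2)$ turns the right-hand side into $\exp(-c\lambda^2)$, which is the desired conclusion. There is no substantive obstacle here beyond correctly bookkeeping the exponent of $n$ to see that the constraint from (\ref{eqb1}) matches the hypothesis $p<\tfrac{6}{1+2s}$; the role of this lemma is simply to extend Lemma~\ref{prop_aaa_1} from the linear evolution (the case $s=0$) to the $H^s$-Sobolev scale, which will be needed when controlling derivatives of the random data in the subsequent nonlinear analysis.
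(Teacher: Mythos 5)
Your proof is correct and follows essentially the same chain of reasoning as the paper's: Chebyshev, Minkowski (to exchange $L^r(d\omega)$ and $L^p_x$), the pointwise Gaussian moment bound (\ref{gauss-bd-1}), the square-function estimate (\ref{eqb1}), and optimization of the moment $r$ in $\lambda$. You spell out the reduction from $2\le p<3$ to $p>3$ via an $L^q\hookrightarrow L^p$ embedding, which the paper compresses into its ``without loss of generality assume $p>3$,'' and you make the optimization explicit; these are presentational rather than substantive differences.
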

\begin{proof}
Without loss of generality assume $p>3$.  Let $\lambda>0$ and fix $p_1=p_1(\lambda)>p$.  The Tchebyshev and Minkowski inequalities give
\begin{align}
\nonumber &\mu_F^{(N)}(\{\phi_N:\lVert (\sqrt{-\Delta})^s\phi_N\rVert_{L_x^p}>\lambda\})\\
\nonumber &\hspace{1.2in}\leq \frac{1}{\lambda^{p_1}}\mathbb{E}_{\mu_F^{(N)}}\bigg[\lVert (\sqrt{-\Delta})^s\phi_N\rVert_{L_x^p}^{p_1}\bigg]\\
\nonumber &\hspace{1.2in}\leq \frac{1}{\lambda^{p_1}}\bigg\lVert \bigg(\int |(\sqrt{-\Delta})^s\phi_N(x)|^{p_1}d\mu_F^{(N)}(\phi_N)\bigg)^{1/p_1}\bigg\rVert_{L_x^p}^{p_1},\\
\nonumber &\hspace{1.2in}\leq \bigg(\frac{\sqrt{p_1}}{\lambda}\bigg)^{p_1}\bigg\lVert\bigg(\sum_n \frac{|e_n(x)|^2}{n^{2(1-s)}}\bigg)^{1/2}\bigg\rVert_{L_x^p}^{p_1}\\
\nonumber &\hspace{1.2in}\lesssim \bigg(\frac{\sqrt{p_1}}{\lambda}\bigg)^{p_1},
\end{align}
where we have used ($\ref{eqb1}$) and ($\ref{gauss-bd-1}$) along with the hypothesis $p<\frac{6}{1+2s}$.  Optimizing in the choice of $p_1$ gives the desired bound.
\end{proof}

\vspace{0.1in}

Elaborating the above observations shows that, if we fix any $\frac{5}{6}<s<1$, the initial value problem 
\begin{align}
(\ref{1.0})\quad\textrm{with}\quad u(0)=\phi_\omega,\quad t\in [0,T]\label{5.1}
\end{align}
has a unique solution $u=u(x,t)$ satisfying
\begin{align}
\lVert u-S(t)\phi_\omega\rVert_{X^{s,b}}<1\label{5.2}
\end{align}
provided $\omega$ is restricted to the complement of a set $\Omega(T)$, where
\begin{align}
|\Omega(T)|<e^{-(1/T)^c}\quad\textrm{as}\quad T\rightarrow 0\label{5.3}
\end{align}
for some $c>0$.  
More precisely, 
\begin{proposition}
\label{prop-fp-1}
Given $A\gg 1$, there is $T=A^{-c}$ such that on the time interval $[t_0,t_0+T]$, $t_0<O(1)$, 
the solution $u$ to ($\ref{5.1}$) exists for all $\phi$ outside a singular set of $\mu_F$-measure at most $O(\exp(-cA^c))$ for some constant $c>0$, and satisfies, for $0<s<1$, $t\in [t_0,t_0+T)$,
\begin{align}
\lVert u(t)-S(t)\phi\rVert_{X^{s,b}}\lesssim A^{4}\label{eq-claim-1}
\end{align}
for every $b>1/2$ sufficiently close to $1/2$.  Hence,
\begin{align}
\lVert u(t)-S(t)\phi\rVert_{H_x^s}\lesssim A^4\quad\textrm{for}\quad t\in [t_0,t_0+T].\label{3.14p}
\end{align}
Moreover, for $0<s'<\frac{1}{2}$,
\begin{align}
\lVert u(t)\rVert_{H_x^{s'}}\lesssim A^4.\label{3.15}
\end{align}
\end{proposition}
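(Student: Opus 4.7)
The plan is to run a Banach fixed-point argument for the integral equation satisfied by $v:=u-S(\cdot-t_0)\phi$, namely (\ref{eq_aaa1}) suitably translated to $[t_0,t_0+T]$, set in a ball of radius $R\sim A^4$ in $X^{s^*,b}$ on an interval of length $T=A^{-c}$. The bounds (\ref{3.14p}) and (\ref{3.15}) will then follow from the embedding $X^{s,b}\hookrightarrow C_t H_x^s$ and the data-side probabilistic estimate from Lemma~\ref{lem-prob-data}. First I would fix parameters: choose $s^*\in (\tfrac{5}{6},1)$ close to $1$, $b>\tfrac{1}{2}$ close to $\tfrac{1}{2}$, and $p_0>\tfrac{3}{3-s^*}$ close to $\tfrac{3}{3-s^*}$, so that Lemma~\ref{lem210} applies with $p=p_0$ and the condition (\ref{3.4}) of Section 3.1 holds. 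Applying Lemma~\ref{prop_aaa_1} with $(p,q)=(4p_0,8)$ and $\lambda\sim A$, and Lemma~\ref{lem-prob-data} with $\lambda\sim A^4$, I would define the exceptional set $\Omega(T)$ as the union of the corresponding bad sets; its $\mu_F$-measure is at most $O(\exp(-cA^c))$, dominated by the $O(\exp(-cA^2))$ contribution from Lemma~\ref{prop_aaa_1}. Off $\Omega(T)$ one has
\begin{align*}
\lVert S(t)\phi\rVert_{L_x^{4p_0}L_t^8([t_0,t_0+T])}\lesssim A,\qquad \lVert \phi\rVert_{H_x^{s'}}\lesssim A^4,\quad 0<s'<\tfrac{1}{2}.
\end{align*}

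Next I would set up the contraction. Writing $v_0(t):=S(t-t_0)\phi$ and letting $\Phi(v)$ denote the right-hand side of the Duhamel formula for $v$, I would apply Lemma~\ref{lem210} with $f=|\iRe(v_0+v)|^3\iRe(v_0+v)$ and use the pointwise estimate $|f|\lesssim|v_0+v|^4$ together with H\"older in time to obtain
\begin{align*}
\lVert \Phi(v)\rVert_{X^{s^*,b}}\lesssim \big(\lVert v_0\rVert_{L_x^{4p_0}L_t^8}+\lVert v\rVert_{L_x^{4p_0}L_t^8}\big)^4.
\end{align*}
On $\Omega(T)^c$ the first term is $\lesssim A$; by the calculation of Section 3.1 (which is exactly the step that enforces $s^*>\tfrac{5}{6}$) the second is $\lesssim T^{1/\delta}\lVert v\rVert_{X^{s^*,b}}\lesssim T^{1/\delta}R$. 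Choosing $R\sim A^4$ and $T=A^{-c}$ with $c>3\delta^{-1}$ ensures $T^{1/\delta}R\ll A$, so $\lVert \Phi(v)\rVert_{X^{s^*,b}}\lesssim A^4\leq R$ and $\Phi$ maps the ball into itself. A parallel estimate for $\Phi(v)-\Phi(w)$, factoring $|\iRe u|^3\iRe u-|\iRe w|^3\iRe w$ into four terms (three bounded by $v_0+v$ or $v_0+w$ in the same mixed norm and one being $v-w$), yields a contraction constant $\ll 1$. The resulting fixed point satisfies $\lVert v\rVert_{X^{s^*,b}}\lesssim A^4$, whence (\ref{eq-claim-1}) follows for every $0<s<1$ by the monotonicity $X^{s^*,b}\hookrightarrow X^{s,b}$.

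Finally, the embedding $X^{s,b}\hookrightarrow C_t H_x^s$ for $b>\tfrac{1}{2}$ converts (\ref{eq-claim-1}) into (\ref{3.14p}). For (\ref{3.15}) I would split $u(t)=v_0(t)+v(t)$, estimate $\lVert v_0(t)\rVert_{H_x^{s'}}=\lVert \phi\rVert_{H_x^{s'}}\lesssim A^4$ from the good event via Lemma~\ref{lem-prob-data}, and $\lVert v(t)\rVert_{H_x^{s'}}\lesssim \lVert v\rVert_{X^{s^*,b}}\lesssim A^4$ from the same embedding together with $s'<\tfrac{1}{2}<s^*$.

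The hard part will be the coupled choice of parameters. One needs $s^*$, $b$, $p_0$, and $\delta$ to simultaneously satisfy $p_0>\tfrac{3}{3-s^*}$, $4p_0<6$ (so that Lemma~\ref{prop_aaa_1} applies), condition (\ref{3.4}) of Section 3.1, and $b>\tfrac{1}{2}$; moreover $\delta$ must be such that $T^{1/\delta}$ beats the factor $A^3$ arising from the gap between $R=A^4$ and the size $A$ of the free evolution after the choice $T=A^{-c}$. The threshold $s^*>\tfrac{5}{6}$ is the binding constraint and is precisely the origin of the restriction $\alpha<1+\sqrt{5}$ in Theorem~\ref{thm1}; the cubic case at hand sits comfortably within that range.
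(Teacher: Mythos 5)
Your proposal is correct and follows essentially the same route as the paper: a contraction in a ball of radius $R\sim A^4$ in $X^{s,b}$ (with $s$ near $1$, $b$ near $1/2$, $p\in(\tfrac{3}{3-s},\tfrac{3}{2})$), powered by Lemma~\ref{lem210}, the estimate of Section 3.1, and the probabilistic bounds of Lemmas~\ref{prop_aaa_1} and~\ref{lem-prob-data}, with (\ref{3.14p}) and (\ref{3.15}) then following from the embedding $X^{s,b}\subset L_t^\infty H_x^s$. The only blemish is the stated condition $c>3\delta^{-1}$, which should read $c>3\delta$ (so that $T^{1/\delta}R = A^{-c/\delta+4}\ll A$); your inequality is stronger than needed, so the argument is unaffected.
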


\begin{proof}
For each $v\in X^{s,b}([0,T))$, define 
\begin{align*}
\Phi(v)=\int_0^t S(t-\tau)(\sqrt{-\Delta})^{-1}|\iRe (v_0(\tau)+v(\tau))|^3\iRe(v_0(\tau)+v(\tau))d\tau.
\end{align*}

Fix $0<s<1$ and $b>1/2$ (with $b$ close to $1/2$) to be determined later in the argument, along with $R>0$.  Now, fix a parameter $p\in (\frac{3}{3-s},\frac{3}{2})$, and let $B_{R}$ be the ball of radius $R$ in $X^{s,b}([0,T))$.  Invoking Lemma \ref{lem210}, we then obtain the estimate
\begin{align}
\nonumber \lVert \Phi(v)\rVert_{X^{s,b}}&\lesssim \lVert v_0\rVert_{L_x^{4p}L_t^{8}}^{4}+\lVert v\rVert_{L_x^{4p}L_t^{8}}^{4}\\
&\lesssim A^{4}+\lVert v\rVert_{L_x^{4p}L_t^{8}}^{4}.\label{eq--aab1}
\end{align}
provided that $\phi$ belongs to the set 
\begin{align*}
\Omega_{A  }=\{\phi:\lVert S(t)\phi\rVert_{L_x^{4p}L_t^{8}}\leq A  \}.
\end{align*}

At this point, we pause to note that the condition $p<\frac{3}{2}$ implies that we may apply Lemma $\ref{prop_aaa_1}$ to obtain 
\begin{align*}
\mu_F(\Omega\setminus \Omega_A)=O(\exp(-cA^c))
\end{align*}
for some constants $c,C>0$.  This condition on $p$ is compatible with the condition $p>\frac{3}{3-s}$ for $s<1$.

From the considerations in Section 3.1, we have
\begin{align}
\lVert v\rVert_{L_x^{4p}L_t^{8}}\leq T^\delta R.\label{eq_aaa_3}
\end{align}

Combining ($\ref{eq--aab1}$) and ($\ref{eq_aaa_3}$), we have shown
\begin{align*}
\lVert \Phi(v)\rVert_{X^{s,b}([0,T])}\leq C[A^{4}+(T^\delta R)^{4}].
\end{align*}
Choosing $R=2CA^{4}$ and $T<R^{-\frac{1}{\delta}}$, we obtain 
\begin{align*}
\lVert \Phi(v)\rVert_{X^{s,b}([0,T))}\leq R
\end{align*}
so that $\Phi$ maps the ball $B_R$ to itself.  Repeating the above estimates combined with the inequality
\begin{align*}
\big| |\iRe f|^{3} f-|\iRe g|^{3} g\big|&\leq (|\iRe f|^{3}+|\iRe g|^{3})|f-g|,
\end{align*}
we obtain that $\Phi$ is a contraction for $T>0$ sufficiently small depending on $A$; this gives the desired local existence result.

The claim ($\ref{eq-claim-1}$) then follows directly from $v\in B_R$, while ($\ref{3.15}$) is a consequence of the embedding $X^{s,b}\subset L_t^\infty H_x^s$ for $b>1/2$ and Lemma $\ref{lem-prob-data}$.
\end{proof}

Using Proposition $\ref{prop-fp-1}$, the same scheme as used in \cite{B1}, and which exploits essentially the invariance of the Gibbs measure under the flow, permits then to obtain solutions global in time, with
$u-S(t)\phi_\omega\in \cap_{s<1}X^{s,b}$, almost surely in $\omega$.

The key ingredient in this stage of the argument is to obtain suitable bounds on the growth of the norms $H_x^s$, $s<1/2$.  

\begin{proposition}
\label{prop-fp-2}
Fix $0<s<\frac{1}{2}$ and $\delta>0$.  Then there exists a set $\Sigma=\Sigma(\delta)\subset\Omega$ such that $\mu_F^{(N)}(\{P_N\phi:\omega\in \Sigma\})>1-\delta$, and for $\omega\in \Sigma$ the solution $u=u_N$ of (\ref{truncated}) (with $\alpha=3$) satisfies
\begin{align*}
\lVert u(t)\rVert_{L_t^\infty([0,T];H_x^s(B))}\leq C\bigg(\log \frac{T}{\delta}\bigg)^{C}.
\end{align*}
for every $0<T<+\infty$.  Here $C$ is some constant.
\end{proposition}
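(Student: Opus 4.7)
The plan is to iterate the short-time Proposition \ref{prop-fp-1} across a covering of $[0,T]$ by intervals of length $T_A = A^{-c}$, using the invariance of the truncated Gibbs measure $\mu_G^{(N)}$ under the flow of (\ref{truncated}) to control the bad set at each step, and optimizing the parameter $A$ at the end. Concretely, I would fix $A = A(T,\delta) \gg 1$ (to be chosen later), partition $[0,T]$ into $J \sim T A^c$ intervals $I_j = [jT_A,(j+1)T_A]$, and let $\Omega_A$ denote the admissible set $\{\phi : \lVert S(\cdot)\phi\rVert_{L_x^{4p}L_t^8(I_j)} \leq A\}$ appearing in the proof of Proposition \ref{prop-fp-1}. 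Writing $\Phi_N^t$ for the flow of (\ref{truncated}), consider the event
\[
\Sigma := \bigcap_{j=0}^{J-1} \Phi_N^{-jT_A}(\Omega_A).
\]
On $\Sigma$, the value $u_N(jT_A)$ lies in $\Omega_A$ for every $j$, so Proposition \ref{prop-fp-1} applies on each $I_j$ and yields $\lVert u_N(t)\rVert_{H_x^s} \lesssim A^4$ uniformly in $t \in [0,T]$.

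The next step is to bound $\mu_F^{(N)}(\Sigma^c)$, working first at the level of $\mu_G^{(N)}$. The truncated system (\ref{truncated}) is a finite-dimensional Hamiltonian ODE, so a standard Liouville-type calculation shows that $\mu_G^{(N)}$ is invariant under $\Phi_N^t$, giving
\[
\mu_G^{(N)}\bigl(\Phi_N^{-jT_A}(\Omega_A^c)\bigr) = \mu_G^{(N)}(\Omega_A^c)
\]
for every $j$. Since the potential $V_N = \tfrac{1}{5}\int |P_N\iRe \phi|^5\,dx$ is nonnegative, the density $d\mu_G^{(N)}/d\mu_F^{(N)} = Z_N^{-1} e^{-V_N}$ is bounded above by $Z_N^{-1}$, and $Z_N$ converges to a positive constant since $\alpha = 3 < 4$. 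Combining this with Lemma \ref{prop_aaa_1} gives $\mu_G^{(N)}(\Omega_A^c) \lesssim e^{-cA^c}$, whence a union bound over $j$ yields $\mu_G^{(N)}(\Sigma^c) \lesssim T A^c e^{-cA^c}$.

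The main (mild) obstacle is the transfer of this bound back to $\mu_F^{(N)}$, since the reverse density $Z_N e^{V_N}$ is unbounded. I would use a standard cut-and-transfer: for an auxiliary threshold $K > 0$,
\[
\mu_F^{(N)}(\Sigma^c) \leq \mu_F^{(N)}\bigl(\Sigma^c \cap \{V_N \leq K\}\bigr) + \mu_F^{(N)}(V_N > K) \lesssim e^K\, T A^c e^{-cA^c} + \mu_F^{(N)}(V_N > K),
\]
the first bound following because on $\{V_N \leq K\}$ the reverse density is at most $Z_N e^K$. For $\alpha = 3 < 4$, standard concentration estimates for the Gaussian chaos $V_N$ (e.g.\ moment bounds followed by optimization in the moment) yield a stretched-exponential tail $\mu_F^{(N)}(V_N > K) \lesssim e^{-cK^\beta}$ for some $\beta > 0$. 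Choosing $K = A^{c'}$ with $c' \in (0,c)$ suitably small then balances both contributions and produces $\mu_F^{(N)}(\Sigma^c) \lesssim e^{-c'' A^{c''}}$ for some $c'' > 0$.

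Finally, setting this last bound equal to $\delta$ forces $A \lesssim (\log(T/\delta))^{1/c''}$; inserting into the pointwise estimate $\lVert u_N(t)\rVert_{H_x^s} \lesssim A^4$ valid on $\Sigma$ yields the desired $\lVert u_N\rVert_{L_t^\infty([0,T];H_x^s)} \lesssim (\log(T/\delta))^C$. The delicate point of the plan is the transfer step, where one must quantitatively compare $\mu_G^{(N)}$ and $\mu_F^{(N)}$ despite the unbounded relative density in one direction; everything else reduces to a direct iteration of Proposition \ref{prop-fp-1} together with the finite-dimensional Hamiltonian invariance of $\mu_G^{(N)}$ under the truncated flow.
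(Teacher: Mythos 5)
Your proof follows the same overall strategy as the paper: cover $[0,T]$ by short intervals of length $A^{-c}$, use the invariance of $\mu_G^{(N)}$ under the truncated flow to control the bad set by a union bound, and choose $A\sim(\log(T/\delta))^{C}$ to close the estimate. Your cut-and-transfer step from $\mu_G^{(N)}$ back to $\mu_F^{(N)}$ (cutting at the level $\{V_N\le K\}$, applying the Gaussian tail \eqref{eq19}, and optimizing in $K$) is actually spelled out in more detail than in the paper, which leaves the transfer implicit behind a reference to \cite[Lemma~3.25]{B1}; this is welcome since the statement really does ask for $\mu_F^{(N)}$-measure and the one-sided bound $d\mu_G^{(N)}/d\mu_F^{(N)}\le 1$ goes the wrong way for that conclusion.

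There is one small but genuine omission. The set $\Sigma$ you construct depends on both $T$ and $\delta$: $A$ is chosen as a function of $T/\delta$, hence so is $\Omega_A$ and hence $\Sigma$. But the proposition asserts the existence of a single $\Sigma=\Sigma(\delta)$, independent of $T$, with $\mu_F^{(N)}(\Sigma^c)<\delta$ and the growth bound valid \emph{for all} $T>0$ simultaneously. To finish, you need the standard last step the paper invokes: take $T_j=2^j$ and $\delta_j=\delta\,2^{-j}$, form $\Sigma(\delta)=\bigcap_{j\ge1}\Sigma_{T_j,\delta_j}$, note that $\mu_F^{(N)}(\Sigma(\delta)^c)\le\sum_j\delta_j\le\delta$, and observe that for $T\in[T_{j-1},T_j]$ the resulting bound $\lesssim(\log(T_j/\delta_j))^{C}$ is still $\lesssim(\log(T/\delta))^{C}$ after enlarging $C$. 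Adding this intersection argument makes your proof complete and consistent with the paper's.
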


\begin{proof}
The argument follows a familiar approach, and is based on the invariance of the Gibbs measure; see \cite[Lemma $3.25$]{B1}.  Fix $T\gg 1$, $\delta\ll 1$ and set $A=C\left(\log\frac{T}{\delta}\right)^C$ for an appropriate constant $C$.  Let $\Sigma_A$ be the good data set given by Proposition $\ref{prop-fp-1}$.

Let $V$ be the operator $V_N(\tau)$, where $V_N(t)$ denotes the (nonlinear) evolution operator corresponding to the truncated equation ($\ref{truncated}$), and $\tau=A^{-C}$ the time interval given by Proposition $\ref{prop-fp-1}$.  Define
\begin{align*}
\Sigma':=\Sigma_A\cap V^{-1}\Sigma_A\cap V^{-2}\Sigma_A\cap \cdots \cap V^{-\lfloor \frac{T}{\tau}\rfloor}\Sigma_A.
\end{align*}
Invoking the invariance of $\mu_G^{(N)}$ with respect to the flow $V_N(t)$ we therefore have
\begin{align}
\mu_G^{(N)}((\Sigma')^c)&\leq \frac{T}{\tau}\cdot\mu_G^{(N)}((\Sigma_A)^c)\leq \frac{T}{\tau}\exp(-cA^c)<\delta\label{3.18}
\end{align}
by our choice of $A$.

It is clear from the construction that ($\ref{3.14p}$), ($\ref{3.15}$) hold on the entire time interval $[0,T]$ for $\phi\in\Sigma'$.  The proof of Proposition $\ref{prop-fp-2}$ is then easily completed by intersecting the sets $\Sigma'=\Sigma'_{T,\delta}$ over suitable sequences $T\rightarrow\infty$, $\delta\rightarrow 0$.
\end{proof}

The proof of Theorem $\ref{thm1}$ in the case $\alpha=3$ is completed by standard approximation arguments, see for instance \cite{B1}.  The long-time bounds on the truncated 
evolution ($\ref{truncated}$) given by Proposition $\ref{prop-fp-2}$ allow us to iteratively invoke the local-theory of Proposition $\ref{prop-fp-1}$ for the original evolution 
($\ref{1.0}$) to extend the local well-posedness given by to arbitrarily long  time intervals.  This completes the proof of Theorem $\ref{thm1}$ in this case.

\subsection{Extension to the range $\alpha<1+\sqrt{5}$ for the equation ($\ref{reformulated}$)}

We examine the limitation on powers $\alpha$ for which the conclusion from Section 3.2 remains valid.  Let
\begin{align}
u\in S(t)u_0+B,\quad u=v_0+v\label{6.2}
\end{align}
with $B$ a small ball in $X^{s,b}$, $s<1$ to be specified.  Applying again Lemma $\ref{lem210}$, we need to estimate
\begin{align}
\lVert v_0\rVert_{L_x^{\beta p}L_t^{2\beta}}+\lVert v\rVert_{L_x^{\beta p}L_t^{2\beta}}\label{6.3}
\end{align}
where $\beta=1+\alpha$ and $p>\frac{3}{3-s}$.

The same calculation as in the previous section shows that
\begin{align}
\lVert v_0\rVert_{L_x^{\beta p}L_t^{2\beta}}\lesssim \bigg(\sum_n \frac{\lVert e_n\rVert_{L_x^{\beta p}}^2}{\langle n\rangle^2}\bigg)^{1/2}\lesssim \bigg(\sum_n \bigg(\frac{1}{\langle n\rangle}\bigg)^{\frac{6}{\beta p}}\bigg)^{1/2}\label{6.4}
\end{align}
which leads to the condition
\begin{align}
s<\frac{6-\beta}{2}=\frac{5-\alpha}{2}.\label{6.5}
\end{align}

From the discussion of Section 3.2, $v$ may be replaced by a function $u_1$ of the form ($\ref{3.3}$) and
\begin{align}
\nonumber \lVert u_1\rVert_{L_x^{\beta p}L_t^{2\beta}}&\leq \lVert u_1\rVert_{L_x^{\beta p}H_t^{\frac{1}{2}(1-\frac{1}{\beta})}}\\
\nonumber &\leq \bigg\lVert \bigg(\sum_n \langle n\rangle^{-2s+1-\frac{1}{\beta}}|a_n|^2e_n(x)^2\bigg)^{1/2}\bigg\rVert_{L_x^{\beta p}}\\
&\leq \max_n \frac{\lVert e_n\rVert_{L_x^{\beta p}}}{\langle n\rangle^{s-\frac{1}{2}+\frac{1}{2\beta}}}\label{6.6}
\end{align}

Again from ($\ref{ei_est}$) this leads to the condition
\begin{align*}
1-\frac{3}{\beta p}<s-\frac{1}{2}+\frac{1}{2\beta}
\end{align*}
or, equivalently
\begin{align}
s>\frac{3\beta-7}{2(\beta-1)}=\frac{3\alpha-4}{2\alpha}\label{6.7}
\end{align}

In view of ($\ref{6.5}$), ($\ref{6.7}$), we see that the conclusion from Section 3.2 holds for the equation ($\ref{reformulated}$) with $s<\frac{5-\alpha}{2}$, provided
\begin{align*}
\alpha^2-2\alpha-4<0,\quad\textrm{that is},\quad \alpha<1+\sqrt{5}.
\end{align*}
This completes the proof of Theorem $\ref{thm1}$.

\section{Proof of Theorem $\ref{thm2}$: convergence based approach}

In this section we prove Theorem $\ref{thm2}$, which establishes global well-posedness for the nonlinear wave equation reformulated as ($\ref{reformulated}$), $\mu_F$-almost surely in the statistical ensemble.  In particular, for arbitrary $\alpha<4$, which is the range for which the problem admits a well-defined Gibbs measure, one may still produce for almost all data a solution of ($\ref{reformulated}$) which is obtained as a limit of the solutions of the truncated equations and satisfying
\begin{align}
u(t)-S(t)u(0)\in \bigcap_{s<\frac{5-\alpha}{2}}X^{s,\frac{1}{2}+}.\label{7.1}
\end{align}

The argument is based on estimates on the solutions of the truncated equations rather than constructing a solution of ($\ref{reformulated}$) by Picard iteration.

Firstly, note that the ODE ($\ref{truncated}$) has a unique solution, which is global in time (see \cite[Prop. $2.2$]{BT12} (note that we already use here the fact that the equation is defocusing, since we rely on the {\it a priori} bound given by the Hamiltonian).  Moreover, recall that the flow map
\begin{align*}
\phi_N=P_N\phi\mapsto u_N(t)
\end{align*}
leaves the Gibbs measure $\mu_G^{(N)}$ invariant.

The next probabilistic estimate is crucial in our argument.
\begin{lemma}
\label{lem1a}
Fix a time interval $[0,T]$ and let $1\leq p<6$ and $1\leq q<\infty$.  Then 
\begin{align}
\mathbb{E}_{\mu_F}\Big[\lVert u_\phi\rVert_{L_x^pL_t^q}\Big]<C(p,q,T)\label{7.6}
\end{align}
denoting $u_\phi=u_N$, $u_N(0)=\phi_N$.  In fact, there is the stronger distributional inequality
\begin{align}
\label{seventeen}\mu_F^{(N)}\bigg(\bigg\{\phi_N:\lVert u_N\rVert_{L_x^p(B;L_t^q([0,T))}>\lambda\bigg\}\bigg)\lesssim e^{-c(\lambda T^{-1/q})^c}.
\end{align}
for some $c>0$
\end{lemma}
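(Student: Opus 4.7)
The proof proceeds in two stages: first we establish the distributional bound under the (normalized) Gibbs measure $\mu_G^{(N)}$ using its invariance under the flow $V_N(t)$ of (\ref{truncated}), and then we transfer the estimate to $\mu_F^{(N)}$ via a truncation of the potential $W(\phi) := \frac{1}{\alpha+2}\int |\iRe(P_N\phi)|^{\alpha+2}dx$, which enjoys a good tail under $\mu_F^{(N)}$ precisely because $\alpha<4$.

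For the first stage, fix $r\geq\max(p,q)$ and interchange $L^r(\mu_G^{(N)})$ with $L^p_x$ and then with $L^q_t$ using Minkowski's integral inequality:
\begin{align*}
\big(\mathbb{E}_{\mu_G^{(N)}}\lVert u_N\rVert_{L^p_xL^q_t}^r\big)^{1/r}\leq \big\lVert\lVert u_N(t,x)\rVert_{L^r(\mu_G^{(N)})}\big\rVert_{L^p_xL^q_t}.
\end{align*}
By invariance, for each fixed $(t,x)$ the variable $u_N(t,x)$ has the same $\mu_G^{(N)}$-law as $(P_N\phi)(x)$, so the inner norm is independent of $t$ and the $L^q_t$ integration contributes $T^{1/q}$. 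Since $d\mu_G^{(N)}$ is dominated (up to the normalizing constant $Z$, which is bounded uniformly in $N$ when $\alpha<4$) by $d\mu_F^{(N)}$, one may replace the law $\mu_G^{(N)}$ by $\mu_F^{(N)}$, apply (\ref{gauss-bd-1}) to the centered Gaussian $(P_N\phi)(x)$, and then invoke (\ref{eqb1}), which gives $\lVert(\sum_n|e_n(x)|^2/n^2)^{1/2}\rVert_{L^p_x}\lesssim 1$ for $p<6$. The resulting bound $\mathbb{E}_{\mu_G^{(N)}}\lVert u_N\rVert_{L^p_xL^q_t}^r\lesssim (CT^{1/q}\sqrt{r})^r$, combined with Chebyshev's inequality and the optimization $r\sim (\lambda T^{-1/q})^2$, produces
\begin{align*}
\mu_G^{(N)}\big(\{\lVert u_N\rVert_{L^p_xL^q_t}>\lambda\}\big)\lesssim \exp\big(-c(\lambda T^{-1/q})^2\big).
\end{align*}

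For the second stage, the identity $d\mu_F^{(N)}=Ze^{W}d\mu_G^{(N)}$ together with the pointwise split on the level set $\{W\leq\Lambda\}$ yields, for any measurable set $A$,
\begin{align*}
\mu_F^{(N)}(A)\leq Ze^{\Lambda}\mu_G^{(N)}(A)+\mu_F^{(N)}\big(\{W>\Lambda\}\big).
\end{align*}
The tail of $W$ under the free measure is controlled by Lemma \ref{lem-prob-data} applied with $s=0$ and $p=\alpha+2$: the hypothesis $\alpha<4$ is exactly what places $\alpha+2$ in the admissible range $[2,6)$, and the lemma delivers $\mu_F^{(N)}(\{W>\Lambda\})\lesssim \exp(-c\Lambda^{2/(\alpha+2)})$. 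Taking $A$ to be the event of interest and choosing $\Lambda$ to be a sufficiently small multiple of $(\lambda T^{-1/q})^2$ balances the two contributions and produces (\ref{seventeen}) with an exponent of the form $c(\lambda T^{-1/q})^{4/(\alpha+2)}$.

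The principal obstacle is the transfer from $\mu_G^{(N)}$ to $\mu_F^{(N)}$: the Radon--Nikodym derivative $d\mu_F^{(N)}/d\mu_G^{(N)}\propto e^W$ is unbounded and fails to lie in $L^q(\mu_F^{(N)})$ for any $q>1$, so a direct H\"older comparison is ineffective. The truncation of $W$ circumvents this by paying a multiplicative $e^{\Lambda}$ price in the Gibbs estimate in exchange for needing only a stretched--exponential tail for $W$ under the free measure; this trade-off is profitable in precisely the range $\alpha<4$, which is also the regime in which the Gibbs measure is well defined.
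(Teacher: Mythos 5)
Your proposal is correct and takes essentially the same two-stage approach as the paper: Minkowski/invariance/domination yields the Gibbs-measure tail bound, and a level-set split on the potential $W=\frac{1}{\alpha+2}\int|P_N\iRe\phi|^{\alpha+2}$ transfers it to $\mu_F^{(N)}$. The only cosmetic differences are that you invoke Lemma~\ref{lem-prob-data} (with $s=0$, $p=\alpha+2$) for the tail of $W$ where the paper carries out the equivalent Tchebyshev/Gaussian computation inline, and you carry a normalizing constant $Z$ whereas the paper leaves $\mu_G^{(N)}$ unnormalized so that $\mu_G^{(N)}\le\mu_F^{(N)}$ holds with constant one.
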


\begin{proof}
Without loss of generality, suppose $3<p<q$.  We first observe that it suffices to prove ($\ref{seventeen}$) with $\mu_F^{(N)}$ replaced by the projected Gibbs measure $\mu_G^{(N)}$.  Indeed, from ($\ref{7.3}$), setting
\begin{align*}
E_{\lambda,N}:=\bigg\{\phi_N:\lVert u_N\rVert_{L_x^p(B;L_t^q([0,T))}>\lambda\bigg\}
\end{align*}
and taking $\lambda_1>0$ arbitrary,
\begin{align}
\nonumber \mu_F^{(N)}(E_{\lambda,N})&\leq \mu_F^{(N)}(E_{\lambda,N}\cap \{\phi_N:\lVert \phi_N\rVert_{L_x^{\alpha+2}}\leq\lambda_1\})\\
\nonumber &\hspace{0.8in}+\mu_F^{(N)}(\{\phi_N:\lVert \phi_N\rVert_{L_x^{\alpha+2}}>\lambda_1\})\\
&\leq e^{\frac{1}{\alpha+2}\lambda_1^{\alpha+2}}\mu_G^{(N)}(E_{\lambda,N})+\mu_F^{(N)}(\{\phi_N:\lVert \phi_N\rVert_{L_x^{\alpha+2}}>\lambda_1\})\label{eqb2}
\end{align}
where to obtain the second inequality we have used the Tchebyshev inequality via
\begin{align}
\nonumber &\mu_F^{(N)}(E_{\lambda,N}\cap \{\phi_N:\lVert \phi_N\rVert_{L_x^{\alpha+2}}\leq\lambda_1\})\\
\nonumber &\hspace{1.2in}\leq e^{\frac{1}{\alpha+2}\lambda_1^{\alpha+2}}\int e^{-\frac{1}{\alpha+2}\int|\phi_N|^{\alpha+2}dx}\chi_{E_{\lambda,N}}d\mu_F^{(N)}(\phi_N)\\
\nonumber &\hspace{1.2in}\leq e^{\frac{1}{\alpha+2}\lambda_1^{\alpha+2}}\mu_G^{(N)}(E_{\lambda,N}).
\end{align}

To estimate the second term in $(\ref{eqb2})$ note that, for each $q_2>\alpha+2$, Tchebyshev's inequality followed by Fubini's theorem and the Gaussian bound $(\ref{gauss-bd-1})$ give
\begin{align}
\nonumber \mu_F^{(N)}(\{\phi_N:\lVert\phi_N\rVert_{L_x^{\alpha+2}}>\lambda_1\})&\lesssim \frac{1}{(\lambda_1)^{q_2}}\bigg\lVert \sum_{n=1}^N \frac{g_n(\omega)}{n}e_n(x)\bigg\rVert_{L_\omega^{q_2}L_x^{\alpha+2}}^{q_2}\\
\nonumber &\lesssim \bigg(\frac{\sqrt{q_2}}{\lambda_1}\bigg)^{q_2}\bigg\lVert\bigg(\sum_{n=1}^N \frac{|e_n(x)|^2}{n^2}\bigg)^{\frac{1}{2}}\bigg\rVert_{L_x^{\alpha+2}}^{q_2}\\
\nonumber &\lesssim \bigg(\frac{\sqrt{q_2}}{\lambda_1}\bigg)^{q_2}\bigg(\sum_{n=1}^N n^{-\frac{6}{\alpha+2}}\bigg)^{\frac{q_2}{2}}.
\end{align}
Keeping $\lambda_1>0$ fixed and optimizing in $q_2$ then gives
\begin{align}
\mu_F^{(N)}(\{\phi_N:\lVert \phi_N\rVert_{L_x^{\alpha+2}}>\lambda_1\})\lesssim e^{-c\lambda_1^2}.\label{eq19}
\end{align}

Thus, if we establish an estimate of the form
\begin{align}
\mu_G^{(N)}(\{\phi:\lVert u_\phi\rVert_{L_x^pL_t^q}>\lambda\})\lesssim \exp(-\lambda^{c_1})\label{7.9}
\end{align}
this will imply
\begin{align*}
\mu_F(\{\phi:\lVert u_\phi\rVert_{L_x^pL_t^q}>\lambda\})&\leq \min_{\lambda_1} \bigg(e^{\frac{1}{\alpha+2}\lambda_1^{\alpha+2}-\lambda^{c_1}}+e^{-c\lambda_1^2}\bigg)\lesssim e^{-\lambda^{c'}}.
\end{align*}

We now establish ($\ref{7.9}$).  Fix $q_1=q_1(\lambda)>q$ to be determined later in the argument and note that
\begin{align}
\mathbb{E}_{\mu_G}^{(N)}\bigg[\bigg(\lVert u^N\rVert_{L^p_xL_t^q}\bigg)^{q_1}\bigg]^{1/q_1}&\leq \bigg\lVert \bigg(\int |u^N(t,x)|^{q_1}d\mu_G^{(N)}(\phi)\bigg)^{1/q_1}\bigg\rVert_{L_x^pL_t^q}.\label{eq_aaa2}
\end{align}
On the other hand, the invariance of $\mu_G^{(N)}$ under the evolution given by ($\ref{truncated}$) guarantees
\begin{align*}
\int |u^N(t,x)|^{q_1}d\mu^{(N)}_G(\phi)&=\int |\phi_N(x)|^{q_1}d\mu^{(N)}_G(\phi)\\
&\leq \int |\phi_N(x)|^{q_1}d\mu_F^{(N)}(\phi)\\
&\leq C^{q_1}\int \bigg|\sum_{n=1}^N \frac{g_n(\omega)}{n}e_n(x)\bigg|^{q_1}d\omega\\
&\leq (C\sqrt{q_1})^{q_1}\left(\sum_{n=1}^N \frac{|e_n(x)|^2}{n^2}\right)^{q_1/2}.
\end{align*}
where the last inequality results from standard Gaussian bounds as in the proof of Lemma $\ref{prop_aaa_1}$.   

Inserting this bound into ($\ref{eq_aaa2}$), we obtain the estimate
\begin{align*}
\mathbb{E}_{\mu_G}^{(N)}\bigg[\bigg(\lVert u^N\rVert_{L^p_xL_t^q}\bigg)^{q_1}\bigg]^{1/q_1}&\leq CT^{1/q}\sqrt{q_1}\bigg\lVert \left(\sum_{n=1}^N \frac{|e_n(x)|^2}{n^2}\right)^{1/2}\bigg\rVert_{L_x^p}\\
&\leq CT^{1/q}\sqrt{q_1}\bigg(\sum_{n=1}^N n^{-6/p}\bigg)^{1/2},
\end{align*}
where we have used the estimate ($\ref{eqb1}$) with $\alpha_n=\frac{1}{n}$.  

Recalling the hypothesis $p<6$ and applying Tchebyshev's inequality, we then obtain
\begin{align*}
\mu_G^{(N)}(E_{\lambda,N})&\lesssim \frac{1}{\lambda^{q_1}}\int_{\Omega}  \lVert u^N\rVert_{L_x^pL_t^q}^{q_1}d\mu_G^{(N)}(\phi_N)\lesssim \frac{(T^{1/q}\sqrt{q_1})^{q_1}}{\lambda^{q_1}}.
\end{align*}
Optimizing this estimate in $q_1$, we obtain ($\ref{7.9}$) as desired.
This completes the proof of Lemma $\ref{lem1a}$.
\end{proof}

We also need a variant of Lemma $\ref{lem1a}$ obtained by a similar argument.
\begin{lemma}
\label{lem4.1p}
Under the assumptions of Lemma $\ref{lem1a}$ and taking $M<N$, there is the distributional inequality
\begin{align}
\mu_F^{(N)}(\{\phi_N:\lVert u_N-P_Mu_N\rVert_{L_x^p(B;L_t^q([0,T]))}>\lambda\})<e^{-(\theta\lambda)^c}\label{4.3p}
\end{align}
with $\theta=T^{-\frac{1}{q}}M^{\frac{3}{p}-\frac{1}{2}}$.
\end{lemma}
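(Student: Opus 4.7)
\textbf{Proof proposal for Lemma \ref{lem4.1p}.} The plan is to run the same Minkowski/invariance/Gaussian scheme used in the proof of Lemma \ref{lem1a}, but applied to the high-frequency piece $u_N - P_M u_N$ rather than to $u_N$ itself. The whole point is that the eigenfunction sum appearing after applying the Gaussian moment bound is now restricted to frequencies $n>M$, and for $p<6$ this tail sum gains the power $M^{1-6/p}$, which is precisely what produces the factor $\theta=T^{-1/q}M^{3/p-1/2}$ in the final estimate.

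First I would reduce the claim to an analogous bound with $\mu_F^{(N)}$ replaced by $\mu_G^{(N)}$. This reduction is identical to the one in the proof of Lemma \ref{lem1a}: splitting on the event $\{\lVert \phi_N\rVert_{L_x^{\alpha+2}}\leq \lambda_1\}$, using $\mu_G^{(N)}\leq e^{\frac{1}{\alpha+2}\lambda_1^{\alpha+2}}\mu_F^{(N)}$ on that event, and controlling the complement by the Gaussian bound \eqref{eq19}. Next, for $q_1>q$ to be chosen later, Minkowski's inequality gives
\begin{align*}
\mathbb{E}_{\mu_G^{(N)}}\Big[\lVert u_N-P_Mu_N\rVert_{L_x^pL_t^q}^{q_1}\Big]^{1/q_1}\leq T^{1/q}\Big\lVert\Big(\int|u_N(t,x)-P_Mu_N(t,x)|^{q_1}d\mu_G^{(N)}\Big)^{1/q_1}\Big\rVert_{L_x^p},
\end{align*}
and by the invariance of $\mu_G^{(N)}$ under the truncated flow, the inner integral equals $\int|\phi_N(x)-P_M\phi_N(x)|^{q_1}d\mu_G^{(N)}$, which is further bounded by the same integral against $\mu_F^{(N)}$.

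The key computation is then the Gaussian moment estimate for the tail: using $(\phi_N-P_M\phi_N)(x)=\sum_{M<n\leq N}\frac{g_n(\omega)}{n}e_n(x)$ and \eqref{gauss-bd-1}, one obtains
\begin{align*}
\Big(\int|\phi_N(x)-P_M\phi_N(x)|^{q_1}d\mu_F^{(N)}\Big)^{1/q_1}\lesssim \sqrt{q_1}\Big(\sum_{M<n\leq N}\frac{|e_n(x)|^2}{n^2}\Big)^{1/2}.
\end{align*}
Taking $L_x^p$ and applying \eqref{eqb1} with $\alpha_n=n^{-1}\mathbf{1}_{n>M}$ yields, for $p<6$,
\begin{align*}
\Big\lVert\Big(\sum_{n>M}\frac{|e_n(x)|^2}{n^2}\Big)^{1/2}\Big\rVert_{L_x^p}\lesssim \Big(\sum_{n>M}n^{-6/p}\Big)^{1/2}\lesssim M^{\frac{1}{2}-\frac{3}{p}},
\end{align*}
which is exactly the gain we need. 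Combining everything gives
\begin{align*}
\mathbb{E}_{\mu_G^{(N)}}\Big[\lVert u_N-P_Mu_N\rVert_{L_x^pL_t^q}^{q_1}\Big]^{1/q_1}\lesssim \sqrt{q_1}\,\theta^{-1},
\end{align*}
and Tchebyshev produces $\mu_G^{(N)}(\{\lVert u_N-P_Mu_N\rVert_{L_x^pL_t^q}>\lambda\})\lesssim (C\sqrt{q_1}/(\theta\lambda))^{q_1}$.

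Optimizing by choosing $q_1\sim(\theta\lambda)^2$ gives a bound of the form $\exp(-c(\theta\lambda)^2)$ under $\mu_G^{(N)}$. Finally, reinserting this into the reduction inequality and optimizing in $\lambda_1$ (as in the last display of the proof of Lemma \ref{lem1a}) converts the exponent $2$ into some smaller $c>0$ and yields \eqref{4.3p}. The only real subtlety is bookkeeping the range of $p,q,q_1$ so that all the hypotheses ($p<6$, $q_1>q$, the Gaussian moment bound, and the $\mu_G^{(N)}$-to-$\mu_F^{(N)}$ passage) remain simultaneously valid, and choosing the optimization parameters so that the exponent in $\theta\lambda$ is uniform in $M,N,T$. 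Apart from that bookkeeping, every step is a direct adaptation of the argument already carried out for Lemma \ref{lem1a}.
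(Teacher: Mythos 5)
Your proof is correct and is precisely the ``similar argument'' the paper has in mind: it runs the same Tchebyshev/Minkowski/invariance/Gaussian scheme as Lemma \ref{lem1a}, with the key observation that by invariance $\int|u_N(t,x)-P_Mu_N(t,x)|^{q_1}d\mu_G^{(N)}=\int|\phi_N(x)-P_M\phi_N(x)|^{q_1}d\mu_G^{(N)}$, and the tail sum $\big(\sum_{n>M}n^{-6/p}\big)^{1/2}\lesssim M^{1/2-3/p}$ then produces exactly the factor $\theta^{-1}$. The only cosmetic slip is that the $T^{1/q}$ factor should appear after the invariance step (once the integrand is $t$-independent and the $L^q_t([0,T])$ norm of a constant is taken), not in the Minkowski display itself; the substance of the argument is unaffected.
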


The intent of ($\ref{4.3p}$) is to obtain better bounds letting $M\rightarrow\infty$ (since $p<6$).

\begin{proof}[Proof of Theorem $\ref{thm2}$.]
We first establish convergence of the sequence $(u^{N_k})$ with $N_k=2^k$.  Fix $0<s<1/2$ and $b>1/2$.  We compare the solutions $u_{N_0}$, $u_{N_1}$, $N_0<N_1$, on a fixed a time interval $[0,T]$ with $T>0$ arbitrary.  

We begin by constructing a suitably large set of initial data for which the global existence result will be shown.  Fix $1\leq p<6$ sufficiently close to $6$ and $q$ large to be specified later in the argument.  For each dyadic integer $N_0\geq 1$, let $0<B(N_0)<N_0$ be a fixed parameter to be determined later in the argument, and for dyadic $N_1>N_0\geq 1$, define the set
\begin{align}
\nonumber &\Omega_{N_0,N_1}:=\bigg\{\omega\in \Omega: \lVert \phi_{N_1}-\phi_{N_0}\rVert_{H^s}<N_0^{-\frac{1}{2}(\frac{1}{2}-s)},\\
&\hspace{1.5in}\lVert u_{N_0}\rVert_{L_x^p(B;L_{t}^{q}(0,T))},\, \lVert u_{N_1}\rVert_{L_x^p(B;L_{t}^{q}(0,T))}<B(N_0)\bigg\}.\label{def-omega}
\end{align}
By the probabilistic estimates of Lemma \ref{lem-prob-data} and Lemma \ref{lem1a}, we immediately have
\begin{align*}
\mu_F\bigg(\bigg\{\phi_\omega:\omega\in \Omega\setminus\Omega_{N_0,N_1}\bigg\}\bigg)\lesssim \exp(-B(N_0)^c),\quad 1\leq N_0<N_1.
\end{align*}

Subdivide $[0,T]$ in intervals of size $\Delta t$ (which will depend on $B(N_0)$), and write
\begin{align}
\nonumber &u_{N_1}(t+\Delta t)-u_{N_0}(t+\Delta t)=\\
&\hspace{1.2in} S(\Delta t)(u_{N_1}(t)-u_{N_0}(t))\label{7.18}\\
&\hspace{1in}-i\int_t^{t+\Delta t} S(t+\Delta t-\tau)(\sqrt{-\Delta})^{-1}f(\tau)d\tau\label{7.19}
\end{align}
where
\begin{align*}
f=P_{N_1}(|\iRe u^{N_1}|^\alpha \iRe u^{N_1})-P_{N_0}(|\iRe u^{N_0}|^\alpha \iRe u^{N_0}).
\end{align*}

In what follows, we denote $\lVert \cdot\rVert_{X^{s,b}([t,t+\Delta t])}$ by $\lVert \cdot\rVert_{s,b}$.  Note that since $b>\frac{1}{2}$,
\begin{align}
\lVert u_{N_1}(t+\Delta t)-u_{N_0}(t+\Delta t)\rVert_{H^s}\lesssim \lVert u_{N_1}-u_{N_0}\rVert_{s,b}.\label{7.20}
\end{align}

The $\lVert \cdot\rVert_{s,b}$ norm of ($\ref{7.18}$) is then immediately bounded by
\begin{align}
\lVert u_{N_1}(t)-u_{N_0}(t)\rVert_{H_x^s}.\label{7.21}
\end{align}

To bound the $\lVert \cdot \rVert_{s,b}$ norm of ($\ref{7.19}$), decompose $f$ as
\begin{align}
f&=(P_{N_1}-P_{N_0})(|\iRe u_{N_1}|^\alpha \iRe u_{N_1})\label{7.22}\\
&\hspace{0.4in}+P_{N_0}(|\iRe u_{N_1}|^\alpha \iRe u_{N_1}-|\iRe u_{N_0}|^\alpha \iRe u_{N_0}).\label{7.23}
\end{align}

Fix $\sigma>\frac{1}{2}$.  The $\lVert \cdot\rVert_{s,b}$ norm of the contribution of ($\ref{7.22}$) in ($\ref{7.19}$) is then estimated by invoking Lemma $\ref{lem210}$, giving
\begin{align}
\nonumber &\bigg\lVert \int_{t_0}^t S(t-\tau)(\sqrt{-\Delta})^{-1}(P_{N_1}-P_{N_0})(|\iRe u_{N_1}|^\alpha \iRe u_{N_1})d\tau\bigg\rVert_{X^{s,b}(J)}\\
\nonumber &\hspace{1.2in}\lesssim N_0^{-(\sigma-s)}\lVert |u_{N_1}|^{\alpha+1}\rVert_{L_x^{\frac{3}{3-\sigma}+}L_t^2}\\
&\hspace{1.2in}\lesssim N_0^{-(\sigma-s)}B(N_0)^{\alpha+1}\label{eqc2}
\end{align}
provided
\begin{align}
\frac{1}{2}<\sigma<\frac{5-\alpha}{2}.\label{7.15}
\end{align}

On the other hand, turning to the $\lVert\cdot\rVert_{s,b}$ norm of the contribution of ($\ref{7.23}$), note that another application of Lemma $\ref{lem210}$ gives
\begin{align}
\nonumber &\bigg\lVert \int_{t_0}^t S(t-\tau)(\sqrt{-\Delta})^{-1}P_{N_0}(|\iRe u_{N_1}|^\alpha \iRe u_{N_1}-|\iRe u_{N_0}|^\alpha \iRe u_{N_0})d\tau\bigg\rVert_{X^{\sigma,b}(J)}\\
\nonumber &\hspace{0.7in}\lesssim \lVert |u_{N_1}-u_{N_0}|(|u_{N_0}|^\alpha+|u_{N_1}|^\alpha)\rVert_{L_x^{\frac{3}{3-\sigma}+}L_t^2}\\
\nonumber &\hspace{0.7in}\lesssim (\Delta t)^\gamma\lVert u_{N_1}-u_{N_0}\rVert_{L_x^{p_1}L_t^{q_1}}(\lVert u_{N_0}\rVert_{L_x^pL_t^q}^\alpha+\lVert u_{N_1}\rVert_{L_x^pL_t^q}^\alpha)\\
&\hspace{0.7in}\lesssim (\Delta t)^\gamma B(N_0)^\alpha\lVert u_{N_1}-u_{N_0}\rVert_{X^{s,b}(J)},\label{eqc3}
\end{align}
for all $\omega\in \Omega_{N_0,N_1}$, with the exponents $p_1\geq 1$ and $q_1\geq 1$ satisfying $$\frac{3-\sigma}{3}>\frac{1}{p_1}+\frac{\alpha}{p}\quad\textrm{and}\quad \frac{1}{2}=\frac{1}{q_1}+\frac{\alpha}{q}+\gamma,$$ and provided that
\begin{align*}
\frac{3}{2}-\frac{1}{q_1}-\frac{3}{p_1}<s.
\end{align*}
Note that the choice of $p_1$ and $q_1$ is possible under the condition
\begin{align*}
0<\gamma<2-\frac{\alpha}{2}-\frac{\alpha}{q}.
\end{align*}
Letting $\alpha<4$ be given and choosing $q$ sufficiently large ensures that such a $\gamma$ exists.

Combining ($\ref{7.21}$), ($\ref{eqc2}$) and ($\ref{eqc3}$), we obtain
\begin{align}
\nonumber \lVert u_{N_1}-u_{N_0}\rVert_{X^{s,b}(J)}&\lesssim \lVert u_{N_1}(t_0)-u_{N_0}(t_0)\rVert_{H_x^s}+B(N_0)^{\alpha+1}N_0^{-(\sigma-s)}\\
&\hspace{0.6in}+(\Delta t)^\gamma B(N_0)^\alpha\lVert u_{N_1}-u_{N_0}\rVert_{X^{s,b}(J)}\label{eqcc1}
\end{align}

Choose
\begin{align}
\Delta t=\bigg[\frac{1}{2CB(N_0)^\alpha}\bigg]^{1/\gamma}\label{def_delta}
\end{align}
and
\begin{align}
B(N_0)\sim (\log N_0)^{\gamma/\alpha}\label{4.20}
\end{align}
where $C>0$ is the implicit constant in ($\ref{eqcc1}$), so that $\exp\left(\frac{T}{\Delta t}\right)$ is a sufficiently small power of $N_0$.  

We get the bound
\begin{align}
\lVert u_{N_1}-u_{N_0}\rVert_{X^{s,b}(J)}&\lesssim \lVert u_{N_1}(t_0)-u_{N_0}(t_0)\rVert_{H_x^s}+N_0^{-(\sigma-s)}B(N_0)^{\alpha+1}.\label{7.32}
\end{align}

Recalling ($\ref{7.20}$), it follows that 
\begin{align}
\nonumber\lVert (u_{N_1}-u_{N_0})(t+\Delta t)\rVert_{H_x^s}&\lesssim \lVert u_{N_1}-u_{N_0}\rVert_{s,b}\\
&\leq C\lVert (u_{N_1}-u_{N_0})(t)\rVert_{H_x^s}+N_0^{-\frac{\sigma-s}{2}}\label{7.33}
\end{align} 
For all $\omega\in \Omega_{N_0,N_1}$ we have
\begin{align*}
\lVert (u_{N_1}-u_{N_0}(0)\rVert_{H_x^s}<N_0^{-\frac{1}{2}(\frac{1}{2}-s)}
\end{align*}
so that an iterative application of ($\ref{7.32}$) and ($\ref{7.33}$) give, by the choice of $\Delta t$,
\begin{align}
\max_{t\in [0,T)} \lVert (u_{N_1}-u_{N_0})(t)\rVert_{H_x^s}&\leq C^{T/(\Delta t)}N_0^{-\frac{1}{2}(\frac{1}{2}-s)}<N_0^{-\frac{1}{4}(\frac{1}{2}-s)}.\label{eq-n1n0}
\end{align}

From ($\ref{7.32}$) and ($\ref{eq-n1n0}$) we obtain
\begin{align}
\lVert u_{N_1}-u_{N_0}\rVert_{X^{s,b}([t,t+\Delta t])}\lesssim N_0^{-\frac{1}{4}(\frac{1}{2}-s)}\label{7.35}
\end{align}
for all subintervals $[t,t+\Delta t]\subset [0,T]$, from where one easily derives that
\begin{align}
\lVert u_{N_1}-u_{N_0}\rVert_{X^{s,b}([0,T])}\lesssim \frac{T^{1/2}}{(\Delta t)^{\frac{1}{2}+b}}N_0^{-\frac{1-2s}{8}}\lesssim N_0^{-\frac{1-2s}{16}}\label{7.36}
\end{align}
for every $\omega\in \Omega_{N_0,N_1}$.  

Let $(N_k)$ be a given sequence of integers growing sufficiently fast, e.g. $N_k=2^k$.  Arguing as above, we obtain that for every $\omega\in \Omega_{N_k,N_{k+1}}$ with $\Omega_{N_k,N_{k+1}}$ defined as in ($\ref{def-omega}$),
\begin{align*}
\lVert u^{N_{k+1}}-u^{N_k}\rVert_{X^{s,b}([0,T])}&\lesssim N_k^{-\frac{1}{2}(\sigma-s)}.
\end{align*}

To finish the proof of Theorem $\ref{thm2}$, let $\Omega'$ be the set
\begin{align*}
\Omega':=\bigcup_{K\geq 1}\bigcap_{k\geq K} \Omega_{N_k,N_{k+1}}.
\end{align*}
We then obtain that the sequence $(u^{N_k})$ is a Cauchy sequence in the space $X^{s,b}\subset C_tH_x^s$ for all $\omega\in \Omega'$.  Moreover, elementary inequalities combined with the estimate 
\begin{align*}
\mu_F\bigg(\bigg\{\phi_\omega:\omega\in \Omega\setminus\Omega_{N_k,N_{k+1}}\bigg\}\bigg)\lesssim \exp(-B(N_k)^c).\end{align*}
give
\begin{align*}
\mu_F(\{\phi_{\omega}:\omega\in \Omega\setminus\Omega'\})=0.
\end{align*}
The convergence of $u^{N_k}$ to some $u_*$ therefore holds $\mu_F$-almost surely.  

The choice of $B(N_0)$ in ($\ref{4.20}$) (as needed to get and appropriate $\Delta t$) leads to measure estimates in the above argument which do not suffice to conclude immediately convergence of the full sequence $(u_N)$.  But it is possible to achieve this by the following variant of the preceeding.  Given large $N_0$, take
\begin{align}
M=M_0=(\log N_0)^{C(p)}\label{4.26}
\end{align}
with a suitable power $C(p)$.  Replace $\Omega_{N_0,N_1}$ by
\begin{align}
\nonumber \Omega_{N_0}=\bigg\{\omega\in \Omega&: \lVert \phi-\phi_{N_0}\rVert_{H_x^s}<N_0^{-\frac{1}{2}(\frac{1}{2}-s)},\quad \lVert u_{N_0}\rVert_{L_x^p(B;L_t^{q}(0,T))}<B(N_0)\\
&\hspace{0.4in}\textrm{and}\quad\max_{N_0\leq N\leq 2N_0}\,\,\lVert u_N-P_Mu_N\rVert_{L_x^p(B;L_t^q(0,T))}<1\bigg\}\label{4.27}
\end{align}
with $B(N_0)$ as above.  Thus, by Lemmas $\ref{lem1a}$, $\ref{lem4.1p}$ ($T$ is fixed),
\begin{align*}
\mu_F(\Omega\setminus \Omega_{N_0})<e^{-B(N_0)^c}+N_0e^{-\theta^c}
\end{align*}
with $\theta\sim M^{\frac{3}{p}-\frac{1}{2}}$.

Proper choice of $C(p)$ in ($\ref{4.26}$) gives 
\begin{align}
\mu_F(\Omega\setminus \Omega_{N_0})<2e^{-B(N_0)^c}.\label{4.28}
\end{align}

For $N_0\leq N_1\leq 2N_0$, estimate
\begin{align}
\nonumber \lVert u_{N_1}\rVert_{L_x^pL_t^q}&\leq \lVert u_{N_0}\rVert_{L_x^pL_t^q}+\lVert u_{N_1}-P_Mu_{N_1}\rVert_{L_x^pL_t^q}\\
\nonumber &\hspace{0.4in}+\lVert u_{N_0}-P_Mu_{N_0}\rVert_{L_x^pL_t^q}+\lVert P_M(u_{N_1}-u_{N_0})\rVert_{L_x^pL_t^q}\\
\nonumber &<B(N_0)+2+T^{\frac{1}{q}}\lVert P_M(u_{N_1}-u_{N_0})\rVert_{L_{t,x}^\infty}\\
\nonumber &<B(N_0)+2+T^{\frac{1}{q}}M^{\frac{3}{2}-s}\lVert u_{N_1}-u_{N_0}\rVert_{s,b}\\
&<(\log N_0)^C\lVert u_{N_1}-u_{N_0}\rVert_{s,b}+2B(N_0)\label{4.29}
\end{align}

Using ($\ref{4.29}$), inequality ($\ref{eqcc1}$) becomes
\begin{align}
\nonumber \lVert u_{N_1}-u_{N_0}\rVert_{s,b}&\lesssim \lVert u_{N_1}(t_0)-u_{N_0}(t_0)\rVert_{H_x^s}\\
\nonumber &\hspace{0.4in}+(\Delta t)^\gamma B(N_0)^\alpha\lVert u_{N_1}-u_{N_0}\rVert_{s,b}\\
\nonumber &\hspace{0.4in}+(\log N_0)^{C\alpha}\lVert u_{N_1}-u_{N_0}\rVert_{s,b}^{\alpha+1}\\
&\hspace{0.4in}+N_0^{-(\sigma-s)}B(N_0)^{\alpha+1}.\label{4.30}
\end{align}

Therefore
\begin{align*}
\lVert u_{N_1}-u_{N_0}\rVert_{s,b}&\lesssim \lVert u_{N_1}(t_0)-u_{N_0}(t_0)\rVert_{H_x^s}\\
&\hspace{0.4in}+(\log N_0)^{C\alpha}\lVert u_{N_1}-u_{N_0}\rVert_{s,b}^{\alpha+1}\\
&\hspace{0.4in}+N_0^{-\frac{1}{2}(\sigma-s)}
\end{align*}
which again implies ($\ref{7.33}$) and ($\ref{eq-n1n0}$).
\end{proof}

As a concluding observation, we give the following refined bound on the growth of $L_t^\infty H_x^s$ norms for the solutions constructed in Theorem $\ref{thm2}$.
\begin{proposition}
\label{prop-concluding}
Fix $0<\sigma<\frac{5-\alpha}{2}$ and $0<T<+\infty$.  Then there exists a set $\Sigma=\Sigma(\delta)\subset\Omega$ such that $\textrm{mes}(\Sigma)>1-\delta$, and for $\omega\in \Sigma$ the solution $u=u^N$ of (NLW) satisfies
\begin{align*}
\lVert u(t)-S(t)\phi\rVert_{L_t^\infty([0,T];H_x^\sigma(B))}\leq C\bigg(\log \frac{T}{\delta}\bigg)^{C}.
\end{align*}
\end{proposition}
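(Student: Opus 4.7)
The plan is to adapt the iteration scheme of Proposition \ref{prop-fp-2} to produce bounds in the extended range $\sigma<\frac{5-\alpha}{2}$, using the short-time bootstrap developed in the proof of Theorem \ref{thm2} in place of the Picard-based Proposition \ref{prop-fp-1} (which was restricted to $\alpha<1+\sqrt{5}$). Invariance of the truncated Gibbs measure $\mu_G^{(N)}$ under the flow $V_N(t)$ remains the mechanism that promotes a short-time estimate to a long-time estimate with only exponentially small loss in probability.

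First, I would set $A=C(\log(T/\delta))^{C}$ and choose a short time $\tau\sim A^{-c}$ so that the bootstrap closure condition $\tau^{\gamma}A^{\alpha}\ll 1$ of (\ref{def_delta}) holds. Define the good set
\[\Sigma_A:=\bigl\{\phi:\lVert u^N\rVert_{L_x^{p(\alpha+1)}L_t^{q}([0,\tau])}\le A\bigr\},\]
with exponents $p>\frac{3}{3-\sigma}$ and $p(\alpha+1)<6$, both of which are admissible precisely because $\sigma<\frac{5-\alpha}{2}$. Lemma \ref{lem1a}, combined with the $\mu_G^{(N)}\leftrightarrow\mu_F^{(N)}$ comparison carried out in (\ref{eqb2})--(\ref{eq19}), then gives $\mu_F^{(N)}(\Sigma_A^c)\lesssim e^{-cA^{c}}$. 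On $\Sigma_A$, applying Lemma \ref{lem210} to $f=P_N(|\iRe u^N|^{\alpha}\iRe u^N)$ and using H\"older in time to pass from $L_t^{2}$ to $L_t^{q/(\alpha+1)}$ yields
\[\lVert u^N(t)-S(t)\phi\rVert_{X^{\sigma,b}([0,\tau])}\lesssim \tau^{\gamma}A^{\alpha+1}\lesssim A^{c'},\]
and hence, via the embedding $X^{\sigma,b}\subset L_t^{\infty}H_x^{\sigma}$, $\sup_{t\in[0,\tau]}\lVert u^N(t)-S(t)\phi\rVert_{H_x^{\sigma}}\lesssim A^{c'}$.

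To globalize, I would form
\[\Sigma':=\Sigma_A\cap V_N(\tau)^{-1}\Sigma_A\cap\cdots\cap V_N(\tau)^{-\lfloor T/\tau\rfloor}\Sigma_A,\]
so that by invariance $\mu_G^{(N)}((\Sigma')^c)\le (T/\tau)\,e^{-cA^{c}}<\delta$ with the constants in $A=C(\log(T/\delta))^{C}$ chosen appropriately, after which I would transfer back to $\mu_F^{(N)}$ exactly as in the proof of Lemma \ref{lem1a}. For $\phi\in\Sigma'$, the short-time estimate applies on every subinterval $[k\tau,(k+1)\tau]$ with $u^N(k\tau)\in\Sigma_A$ in the role of initial datum, and combining these via the telescoping identity
\[u^N(t)-S(t)\phi=\bigl[u^N(t)-S(t-k\tau)u^N(k\tau)\bigr]+S(t-k\tau)\bigl[u^N(k\tau)-S(k\tau)\phi\bigr]\]
together with the unitarity of $S$ on $H_x^{\sigma}$ gives the claimed $L_t^{\infty}([0,T];H_x^{\sigma})$ bound.

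The main obstacle I anticipate is the bookkeeping needed to combine the $\lfloor T/\tau\rfloor$ subinterval increments while retaining a polylogarithmic, rather than polynomial, dependence on $T/\delta$: this forces one to trade the $\tau^{\gamma}$-gain of Lemma \ref{lem210} against the number of iterations, and to choose the exponent $C$ in $A=C(\log(T/\delta))^{C}$ just large enough that both the probabilistic union bound $(T/\tau)e^{-cA^{c}}<\delta$ and the telescoping sum close simultaneously. Any slack in one of these two constraints immediately degrades the final exponent, so the delicate point is to calibrate $\tau$, $A$, and the H\"older exponent $\gamma$ in concert.
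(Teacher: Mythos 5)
Your plan matches the paper's intended proof: the paper's own justification for this proposition is a single line, namely that the argument is ``identical in fashion to the proof of Proposition \ref{prop-fp-2}'', and your proposal fleshes out precisely what that means in the full range $\alpha<4$: replace the contraction-mapping-based short-time estimate of Proposition \ref{prop-fp-1} (which only works for $\alpha<1+\sqrt 5$) by the bootstrap estimate from the proof of Theorem \ref{thm2} using Lemma \ref{lem210} and the a.s.\ $L^{p}_{x}L^{q}_{t}$ bounds of Lemma \ref{lem1a}, then promote the short-time bound to a long-time bound by intersecting preimages of the good set under the truncated flow and invoking the invariance of $\mu_{G}^{(N)}$, exactly as in the proof of Proposition \ref{prop-fp-2}. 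Your accounting that the conditions $p>\tfrac{3}{3-\sigma}$ and $p(\alpha+1)<6$ are compatible precisely when $\sigma<\tfrac{5-\alpha}{2}$ is correct, and the passage between $\mu_{G}^{(N)}$ and $\mu_{F}^{(N)}$ via (\ref{eqb2})--(\ref{eq19}) is the right mechanism.

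One point, however, deserves more than a bookkeeping remark. In Proposition \ref{prop-fp-2} the conclusion concerns $\lVert u(t)\rVert_{H^{s}_{x}}$ with $s<1/2$, and no telescoping across subintervals is required: at each reset time $k\tau$ one simply bounds $\lVert u(k\tau)\rVert_{H^{s}}\lesssim A$ via Lemma \ref{lem-prob-data} and invariance, and then invokes the per-interval estimate (\ref{3.15}). Here $\sigma$ may exceed $1/2$, the data $\phi$ is a.s.\ not in $H^{\sigma}$, and the quantity $u(t)-S(t)\phi$ with $S(t)$ measured from $t=0$ is genuinely cumulative; your telescoping identity accumulates the Duhamel pieces over $\lfloor T/\tau\rfloor$ subintervals. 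Since Lemma \ref{lem210} applies only on intervals of length $<1/2$, and the H\"older gain in time yields an exponent $\gamma<1$, the per-step increment $\tau^{\gamma}A^{\alpha+1}$ summed over $T/\tau$ steps gives a total of order $T\tau^{\gamma-1}A^{\alpha+1}$, which is polynomial in $T$ rather than polylogarithmic. This is not resolved simply by calibrating $\tau$, $A$, and $\gamma$: either the conclusion should be read as a per-subinterval statement $\lVert u(t)-S(t-k\tau)u(k\tau)\rVert_{H^{\sigma}}\lesssim(\log(T/\delta))^{C}$ in parallel with (\ref{3.14p}), or the constant $C$ must be allowed to depend on $T$, or a genuinely global-in-time estimate on the Duhamel term must be supplied. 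You are right to single this out as the delicate step, but the proposal as written does not close it.
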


The proof of this claim works in an identical fashion as in the proof of Proposition $\ref{prop-fp-2}$.

\end{document}